\documentclass[12pt]{amsart}
\usepackage{amsmath, amssymb}
\usepackage{graphicx}
\usepackage{url}

\newtheorem{theorem}{Theorem}
\newtheorem{conjecture}[theorem]{Conjecture}
\newtheorem{observation}[theorem]{Observation}
\newtheorem{corollary}[theorem]{Corollary}

\newtheorem{lemma}[theorem]{Lemma}

\newtheorem{open}[theorem]{Open Problem}
\newtheorem{definition}{Definition}

\theoremstyle{definition}

\def\bth{\begin{theorem}}
\def\eth{\end{theorem}}
\def\bc{\begin{corollary}}
\def\ec{\end{corollary}}
\def\bcj{\begin{conjecture}}
\def\ecj{\end{conjecture}}

\title[]{Degree Game for Special Regular Graphs}

\author[Gy\H{o}rffy]{Lajos Gy\H{o}rffy }\address{Bolyai Institute,
University of Szeged and John von Neumann University, Kecskemét}

\email{lgyorffy@math.u-szeged.hu}

\keywords{Degree game, Minimum degree, Maker-Breaker games, Cube, Hypercube games, Graph games, Product graphs}

\begin{document}

\begin{abstract}

For a given $d$-regular graph $G$, a Maker-Breaker degree game is played by two players who alternately claim previously unclaimed edges of $G$. In the standard variant, the goal of Maker is to maximize the maximum degree of their induced subgraph, while Breaker aims to minimize it, or equivalently, to guarantee a certain minimum degree in their own subgraph. A classic pairing strategy shows that Breaker can secure at least $\lfloor d/4 \rfloor$ edges at every vertex of any $d$-regular graph. Breaking this bound for general or even for specific classes of graphs has been a long-standing open problem in combinatorial game theory; indeed, J. Beck characterized this challenge in his monograph as the first among the seven most humiliating open problems of positional game theory.
In this paper, we improve the $d/4$ bound for some infinite graph families, such as the hypercube graph $Q_d$, grids and tori. We first show that Breaker can secure a degree of one at every vertex in $Q_3$, then lift this to higher dimensions, where Breaker can guarantee a degree of at least $\lfloor d/3 \rfloor$.
\end{abstract}

\maketitle

\section{Introduction}

In the field of combinatorial game theory, positional games on graphs have attracted significant attention over the past few decades, heavily pioneered by Beck \cite{BB} and Krivelevich et al. \cite{Hefetz}. Among these, Maker-Breaker games represent a central paradigm. In an unbiased $(1:1)$ Maker-Breaker game played on the edge set of a base graph $G$, two players, Maker and Breaker, take turns claiming one free edge of $G$ per turn.

A natural and structurally fundamental variant is the \emph{degree game}. In this setting, the players are interested in the local degree distribution of their subgraphs at the end of the game. Specifically, Breaker's objective is to prevent Maker from creating a vertex with an excessively high degree. From Breaker's perspective, this is equivalent to securing a guaranteed minimum degree in Breaker's own induced subgraph at every single vertex of $G$.

When the game is played on dense graphs, such as the complete graph $K_n$, global potential function arguments allow Breaker to achieve a near-perfect split, restricting Maker's maximum degree to $n/2 + o(n)$. For decades, the best known general bound for Breaker on an arbitrary $d$-regular graph has been the classic $\lfloor d/4 \rfloor$ barrier. This bound is achieved via a simple, static pairing strategy. One fixes an Euler orientation of $G$ (where each vertex has out-degree $\lceil d/2 \rceil$) and partitions the outgoing edges into $\lfloor d/4 \rfloor$ disjoint pairs. Whenever Maker claims one edge of a pair, Breaker immediately responds by claiming the other. 

While this pairing strategy is elegant, it is inherently local and rigid. A major open question in the literature has been whether Breaker can strictly beat this $d/4$ threshold—even by an $\epsilon \cdot d$ term—on any $d$-regular graphs, or at least on prominent, symmetric sparse graph families. 

In this paper, we break the $d/4$ bound by focusing on the $d$-dimensional hypercube graph $Q_d$, which lies on the ``border'' between sparse and dense structures, where the potential functions begin to fail. Our main result is the following:

\begin{theorem} \label{maint}
	In the $(1:1)$ Maker-Breaker degree game played on the hypercube $Q_d$, Breaker has a strategy to secure a degree of at least $\lfloor d/3 \rfloor$ at every vertex.
\end{theorem}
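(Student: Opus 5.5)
The plan follows the abstract: first settle the base case $Q_3$, then lift it to $Q_d$ by decomposing the hypercube into edge-disjoint copies of $Q_3$ and playing independently on each copy.

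\textbf{Base case: $Q_3$.} The first step is to show that in the degree game on $Q_3$ Breaker can secure at least one edge at each of the $8$ vertices. This is the main obstacle. Since $3$ edges per vertex gives $\lfloor 3/4\rfloor=0$, no pairing strategy helps. Breaker has $6$ of the $12$ edges, and an edge cover of $8$ vertices needs at least $4$ edges, so there is some slack, but the proof must be an explicit strategy analysed by cases. I would try the following.
\begin{itemize}
\item Exploit the symmetry of $Q_3$. Up to automorphism, Maker's first move is unique.
\item Breaker answers so as to keep a "threatened vertex" invariant: after each Breaker move, every vertex at which Maker already holds two edges is already covered by Breaker.
\item Breaker never lets two uncovered vertices depend on the same last free edge.
\end{itemize}
Because the game is small, the case tree is finite. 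It can be pruned by symmetry and, if necessary, confirmed by a short computer search. The point to verify is that Maker can never create a double threat: two distinct vertices, each with two Maker edges and only one free edge, where the free edges differ.

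\textbf{Decomposition.} Next I need, for $d=3k+r$ with $0\le r\le 2$, a partition of the coordinate set $[d]$ into $k$ blocks of size $3$ plus a leftover block of size $r$. For each block $B=\{i,j,l\}$, the edges of $Q_d$ in directions $i,j,l$ split into $2^{d-3}$ vertex-disjoint copies of $Q_3$, one for each fixing of the other coordinates. Thus the edge set of $Q_d$ is partitioned into the edges of these $Q_3$ copies over all $k$ blocks, together with the leftover edges in the $r$ remaining directions. Every vertex $v$ lies in exactly one $Q_3$ copy per block, so it lies in exactly $k$ copies, and its $d$ edges are split accordingly.

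\textbf{Combining strategies.} Breaker plays the standard local response strategy on this edge-disjoint union of boards. Whenever Maker claims an edge in some copy of $Q_3$, Breaker replies in that same copy using the base-case strategy. If that copy has no free edge left, or Maker plays a leftover edge, Breaker plays an arbitrary free edge; in the copy where it lands, this extra edge is treated as a phantom Maker move.

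One technical point must be checked for the base case to survive this: the $Q_3$ strategy should still work when Breaker is second player, possibly receives extra edges, and sees Maker occasionally skip turns. Extra Breaker edges only help, provided the strategy is phrased as "whenever Maker plays here, respond here" with arbitrary extra Breaker edges allowed. So I would state the base case in exactly that monotone form.

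Each vertex then gets at least one Breaker edge from each of its $k$ copies. Its Breaker degree is therefore at least $k=\lfloor d/3\rfloor$, which proves Theorem~\ref{maint}.

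The same lifting applies verbatim to any graph that decomposes into edge-disjoint $Q_3$'s with every vertex in the right number of copies. This is how I would expect the grids and tori mentioned in the abstract to be handled as well.
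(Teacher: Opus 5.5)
\paragraph{The reduction.} Your reduction is the same as the paper's. You split the $d$ directions into $k=\lfloor d/3\rfloor$ triples plus $r$ leftover directions, so that $E(Q_d)$ becomes a union of vertex-disjoint $Q_3$-copies (one family per triple) together with leftover edges, and Breaker answers each Maker move inside the copy it touched. This is exactly the paper's product lemma applied to $Q_d=Q_3\square\cdots\square Q_3\square Q_r$, combined with its disjoint-union lemma. Your explicit treatment of exhausted copies, leftover edges and surplus Breaker edges is more careful than the paper's.

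There is one slip. A surplus Breaker edge must be regarded as a bonus \emph{Breaker} edge, with an arbitrary move substituted if that copy's strategy later prescribes it. It should not be treated as a phantom Maker move: that leaves an unanswered Maker move in the copy, in effect giving Maker two consecutive moves there, and a second-player $Q_3$ strategy need not survive this. Your next sentence already states the correct monotonicity argument.

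\paragraph{The gap: the base case.} Essentially all the difficulty lies in showing $\delta_B(Q_3)\ge 1$, and you do not prove it. The rules you list do not form a strategy. After $M_1$ no vertex carries two Maker edges, so your invariant places no constraint on $B_1$. Yet $B_1$ is forced. If $B_1$ does not touch $M_1=ab$, Maker picks a square $a\,b\,b'\,a'$ through $M_1$ that shares no vertex with $B_1$ (one always exists). Maker then plays $aa'$ and then $bb'$; each is a single threat that Breaker must block at $a$, resp.\ $b$. Finally $a'b'$ is a double threat.

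In general Breaker must pre-empt pairs of adjacent uncovered vertices that each already carry one Maker edge, with the edge between them and both third edges free. This includes such pairs that Maker can reach through a chain of forcing moves. Your second bullet points the wrong way: two uncovered vertices sharing the same last free edge are both saved by a single Breaker move.

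The paper fills this step with an explicit case analysis. Breaker replies adjacently to $M_1$. Maker's second move splits into seven cases, with a domination lemma (an edge joining two alive vertices is at least as good as one joining an alive vertex to a dead one) used to prune options. After two or three rounds Breaker finishes with a pairing of the remaining edges. Without such an analysis, or the computer check you mention, the base case, and hence the theorem, remains only asserted.
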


To achieve this, we introduce a hierarchical strategy that avoids the pitfalls of complex global potential functions on sparse graphs. First, we solve the case of $Q_3$ to show that Breaker can always guarantee a degree of at least one at every vertex, this establishes the base case. We then exploit the fact that the hypercube can be naturally decomposed via the Cartesian product operation ($Q_d \cong Q_3 \square Q_{d-3}$). By decomposing the game space into orthogonal copies of low-dimensional hypercubes, Breaker can play independent, parallel defense games. This product-based induction allows the local advantage gained in $Q_3$ to scale linearly with the dimension, effectively shifting the achievable bound from $d/4$ to $d/3$.

The rest of this paper is organized as follows. In Section 2, we contextualize the problem, review the preliminaries, and formally define the game and the notation used. In Section 3, we present the results. Section 4 details the explicit case analysis for the base case on $Q_3$, the Cartesian product decomposition and the proof of the main theorem for general dimensions. Finally, we conclude with some open remarks and future directions in Section 5.

\section{Preliminaries} \label{prel}

\subsection{Origins}
The study of degree games started with Paul Erd\H{o}s who raised the question as follows. Two players, {\em Maker} and {\em Breaker} taking turns, claim the edges of $K_n$ (or $K_{n, n}$), the complete (bipartite) graph on $n$ (or $2n$) vertices. Maker's goal is to maximize the maximum degree in Maker's own induced subgraph, in other words to get as many edges of their own at some vertices as possible. One writes this maximum as $n/2 +k$, since Maker obviously can have at least $n/2$, and the question is for which values can win Maker (Breaker)? 

The problem was investigated by Beck \cite{Beck2} and Sz\'ekely \cite{Szekely}, and they showed that Breaker wins if $k \geq \sqrt{n\log n}$ while Maker wins if $k < \sqrt{n}/15$.
For {\em biased} versions, see Balogh et al. see \cite{BMP}. 

It is natural to consider general graphs as well. In the following, we focus primarly on $d$-regular graphs where $d \in \mathbb{N}$. Let us formally define the Maker-Breaker degree game on a simple graph $G$.

\begin{definition}[Degree game]
	Let $G$ be an arbitrary simple graph. In the Maker-Breaker degree game, Maker and Breaker alternately claim previously unclaimed edges of $G$, one at a time. Maker's objective is to maximize the maximum degree of their induced subgraph (i.e., to claim as many edges incident to some vertex as possible), while Breaker's objective is to minimize it.
\end{definition}

In some cases, it may be more useful to consider an alternative formulation of the game where Breaker's goal is explicitly quantified. Specifically, Breaker aims to secure at least $t \in \mathbb{N}$ edges at every vertex by the end of the game, which is of particular interest when $t=1$ or $t=2$.

\begin{definition}[$t$-Degree Game]
	
	Let $G$ be an arbitrary simple graph. In the $t$-degree game, Maker and Breaker alternately claim previously unclaimed edges of $G$. Breaker's objective is to secure at least $t$ edges at every vertex. Conversely, Maker's objective is to claim at least $\deg_G(x) - t + 1$ edges at some vertex $x \in V(G)$, where $\deg_G(x)$ denotes the degree of $x$ in $G$.
	
	In the special case of the $1$-degree game, Maker aims to claim all edges incident to some vertex $x$ (i.e., $\deg_G(x)$ edges), while Breaker aims to secure at least one edge at every vertex.
		
\end{definition}  

We note that several closely related positional games have recently been explored on sparse and regular graphs. For instance, in the total domination game investigated by Forcan and Mikala\v{c}ki~ \cite{Forc2}, players alternately claim vertices instead of edges to optimize dominating configurations on cubic graphs. Conversely, in the ``Toucher and Isolator'' game introduced by Dowden et al. \cite{TI}, the mechanics strictly involve edge selection, where the primary objective is to evaluate the exact number of vertices that can be successfully isolated under adversarial play.  In fact, Toucher acts as our Breaker and Isolator plays as our Maker, where preventing any vertex isolation corresponds to a Breaker victory. The only algorithmic difference is the move order, as Toucher (Breaker) opens the game in their framework.

Another related approach appears in the papers of Hefetz, Krivelevich, Stojakovi\'c and Szab\'o~\cite{Hefetz}; and Balogh and Pluh\'ar~\cite{BP}, where the minimal graphs on which Breaker (Maker with their notation) can secure a positive degree are characterized.

To simplify notation, for any simple graph $G$, we define $\delta_B(G)$ to be the maximum degree that Breaker can secure at \emph{every} vertex. In other words, $\delta_B(G)$ denotes the largest minimum degree Breaker can guarantee in their own induced subgraph at the end of the game.

\begin{definition}
	For any simple graph $G$, we define $\delta_B(G)$ to be the maximum minimum degree that Breaker can guarantee in their own subgraph at the end of the game.
\end{definition}

\begin{observation}[Folklore, Beck \cite{BB}] \label{d4}
	If $G$ is a $d$-regular graph, then $\delta_B(G) \ge \lfloor d/4 \rfloor$.
\end{observation}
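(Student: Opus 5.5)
The plan is the classical Euler-orientation pairing argument sketched in the introduction. Breaker fixes an auxiliary structure on $E(G)$ before the game starts and then plays a purely reactive pairing strategy. The structure has to satisfy two conditions. Every vertex must own $\lfloor d/4\rfloor$ disjoint pairs of edges. Distinct vertices' pairs must themselves be disjoint, so that every edge of $G$ lies in at most one pair.

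\textbf{Step 1: orientation.} If $d$ is even, every component of $G$ is Eulerian. Traversing an Euler circuit in each component gives an orientation in which every vertex has out-degree exactly $d/2$.

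If $d$ is odd, the number of vertices is even. Add a new vertex joined to all vertices (or add a perfect matching of auxiliary edges) so that all degrees become even. Take an Euler orientation of the enlarged graph and delete the auxiliary edges. Each vertex then has out-degree at least $\lfloor d/2\rfloor$ in $G$.

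In both cases every vertex $v$ has at least $2\lfloor d/4\rfloor$ out-edges, since $\lfloor d/2\rfloor \ge 2\lfloor d/4\rfloor$.

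\textbf{Step 2: pairing.} For each vertex $v$, choose $2\lfloor d/4\rfloor$ of its out-edges and group them into $\lfloor d/4\rfloor$ disjoint pairs. Each edge is an out-edge of exactly one endpoint, so it belongs to at most one pair overall. The set of all chosen pairs is therefore a family of pairwise disjoint $2$-element subsets of $E(G)$.

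\textbf{Step 3: strategy.} Whenever Maker claims an edge that lies in some pair, Breaker claims the other edge of that pair. If that edge is already taken, or if Maker's edge lies in no pair, Breaker claims an arbitrary free edge. Claiming extra edges never hurts Breaker.

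Disjointness makes this strategy well defined. As long as Breaker follows it, in every pair either both edges are still free or Breaker owns at least one of them. At the end of the game all edges are claimed, so Breaker owns at least one edge from each pair. Hence every vertex $v$ receives at least $\lfloor d/4\rfloor$ Breaker edges, because it has that many disjoint pairs of out-edges incident to it. This gives $\delta_B(G)\ge\lfloor d/4\rfloor$.

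The only step needing care is the odd-$d$ orientation, and the auxiliary-vertex trick handles it. The invariant in Step 3 is a routine induction on the number of moves.
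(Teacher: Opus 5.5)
Your proof is correct and is essentially the paper's own argument: the introduction sketches exactly this Euler-orientation pairing strategy, and the auxiliary-vertex trick you use for odd $d$ is the one the paper uses in the proof of Observation~\ref{color}. Your odd-$d$ step, which guarantees out-degree at least $\lfloor d/2\rfloor \ge 2\lfloor d/4\rfloor$, is more accurate than the paper's sketch. The paper claims out-degree $\lceil d/2\rceil$ at every vertex, which is impossible for odd $d$ because the out-degrees must sum to $|E(G)| = nd/2$.
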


	

To provide further context, let us momentarily depart from Maker-Breaker games and consider a related edge-coloring problem. This problem highlights a significant divergence between game-theoretic outcomes and static graph colorings.

\begin{observation}[Folklore] \label{color}
	The edges of any $d$-regular graph $G$ can be $2$-colored such that at every vertex $v \in V(G)$, the number of incident red and blue edges is $\lfloor d/2 \rfloor \pm 1$.
\end{observation}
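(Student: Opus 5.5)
The plan is the classical Euler-tour argument: build an auxiliary graph in which every degree is even, walk an Euler circuit in each connected component, and color the edges alternately red and blue along the circuit.

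The key local fact is the following. Every time the circuit passes \emph{through} a vertex, it uses two consecutive edges of the circuit. Under alternating coloring these two edges get different colors. So each pass contributes one red and one blue edge to that vertex. The only possible exception is the starting vertex of the circuit, where the first and last edges meet. These two edges get different colors if the circuit length is even, and the same color if it is odd. Hence every vertex other than the start of its circuit is perfectly balanced, and the start is either balanced or off by exactly $2$ (one color exceeds the other by $2$).

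\emph{Case $d$ even.} Every component of $G$ has all degrees even, so it has an Euler circuit. Coloring alternately, every vertex gets exactly $d/2$ red and $d/2$ blue edges, except possibly the start vertex of each odd-length circuit. That vertex gets $d/2+1$ of one color and $d/2-1$ of the other. Since $\lfloor d/2\rfloor = d/2$, every vertex is within $\lfloor d/2\rfloor \pm 1$, as required.

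\emph{Case $d$ odd.} Then $n=|V(G)|$ is even, because $nd=2|E(G)|$.
\begin{itemize}
\item Add a new vertex $w$ joined to every vertex of $G$. In $G+w$ every old vertex has even degree $d+1$, $w$ has even degree $n$, and $G+w$ is connected.
\item Take an Euler circuit of $G+w$ starting and ending at $w$, and color it alternately. By the local fact, every vertex $v\neq w$ sees exactly $(d+1)/2$ red and $(d+1)/2$ blue edges of $G+w$. Any imbalance is absorbed by $w$, which we will delete.
\item Delete $w$. Each $v\in V(G)$ loses exactly one incident edge, namely $vw$. So $v$ keeps $(d+1)/2=\lfloor d/2\rfloor+1$ edges of one color and $(d-1)/2=\lfloor d/2\rfloor$ of the other, which is within the claimed range.
\end{itemize}

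There is no serious obstacle. The only points needing care are that the start vertex of the circuit is the single possible source of imbalance, and, for odd $d$, that the auxiliary vertex is placed at the start so that it absorbs this imbalance. Alternatively, one can simply observe that even without this placement, the start vertex's imbalance of $2$ in $G+w$ still leaves it within $\lfloor d/2\rfloor\pm1$ in $G$ after removing one edge.
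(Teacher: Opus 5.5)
Your main argument is correct and is essentially the paper's proof: color an Euler circuit alternately, and for odd $d$ adjoin a vertex $w$ adjacent to all of $V(G)$. It is more careful than the paper's sketch. The paper assumes $G$ is connected, calls $G+w$ ``even-regular'' (it is not, since $\deg w = n$), and never says where the circuit should start. Your choice to start at $w$ is exactly what makes the odd case work.

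Drop your closing ``alternatively'' remark, because it is false. The circuit in $G+w$ has length $n(d+2)/2$, which is odd when $n \equiv 2 \pmod 4$. An old start vertex $v$ then has $(d+3)/2$ edges of one color and $(d-1)/2$ of the other in $G+w$. If $vw$ is of the minority color, $v$ keeps $(d+3)/2 = \lfloor d/2\rfloor + 2$ edges of one color in $G$.

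Concretely, take $K_{3,3}+w$ with parts $\{a_1,a_2,a_3\}$ and $\{b_1,b_2,b_3\}$, and the Euler circuit $(a_1,b_1,a_2,w,a_1,b_2,a_2,b_3,a_3,b_1,w,b_2,a_3,w,b_3,a_1)$. The edges $a_1b_1$, $a_1b_2$, $a_1b_3$ sit in positions $1,5,15$, so all three receive the same color. Starting the circuit at $w$ is therefore necessary, not just convenient.
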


\begin{proof}
	First, suppose $d$ is even (and $G$ is connected). By constructing an Eulerian tour and coloring its edges in an alternating fashion, every vertex receives a balanced number of red and blue edges each time the tour transitions through it. Consequently, at every vertex, the number of incident red and blue edges is exactly $d/2$, except possibly for the starting vertex of the tour (which is also the ending vertex). For this singular vertex, the parity of the alternating colors may cause an imbalance, resulting in $d/2 \pm 1$ red and blue edges when $d \equiv 2 \pmod 4$.
	
	If $d$ is odd, we can add an auxiliary vertex connected to all vertices to obtain an even-regular graph, which similarly yields a distribution of $\lfloor d/2 \rfloor \pm 1$ colored edges at every original vertex.
\end{proof}

Returning to Maker-Breaker games, J. Beck stated in Chapter 49 of his monograph \cite{BB} (p. 655) that the first problem among \emph{the 7 most humiliating open problems} in positional game theory is the following (see also Open Problem 16.1 in Chapter 16): 
When considering the Maker–Breaker degree game played on an arbitrary $d$-regular graph, the challenge is to ``beat'' the pairing strategy and secure a better result for Breaker by establishing that $\delta_B(G) \ge \lfloor (1/4+\epsilon)d \rfloor$ for some $\epsilon > 0$.

\begin{open}[Beck \cite{BB} 16.1]
	Can we improve the lower bound of $\lfloor d/4 \rfloor$ in the degree game to some $c \cdot d$, where $c > 1/4$? Is it possible to secure a bound of $d/2 - o(d)$?
\end{open}

\subsection{Derandomization}

To fully appreciate the significance of degree games, another brief digression might be instructive. The primary motivation stems from the fact that degree games serve as highly effective auxiliary games when analyzing other complex positional games (see, e.g., \cite{BP, Cser, GSZ}). An even deeper connection lies in the interplay between the probabilistic method and game theory. Broadly speaking, when optimal strategic players are replaced by random ones, the expected outcome of the game remains remarkably close to the strategic equilibrium. This fundamental observation is a cornerstone of positional game theory, conceptualized as the \emph{probabilistic intuition} or the \emph{Erd\H{o}s paradigm} \cite{Beck2, erdos, KSZ}. 

In this probabilistic framework, randomness is typically generated via independent coin tossing; for instance, the edges of a graph (or more generally, the vertices of a hypergraph) are colored uniformly at random. From Breaker's perspective, a favorable coloring is one in which the colors of the edges incident to any given vertex are well-balanced. While finding such an appropriate static coloring can be viewed as a \emph{single-player} optimization problem, the subsequent challenge is to \emph{derandomize} this process—that is, to construct an explicit, deterministic winning strategy for one of the players in a competitive, two-player environment.

Adapting this point of view, it is straightforward to verify that the bounds $k \ge \sqrt{n\log n}$ and $k < \sqrt{n}/15$ mentioned at the beginning of Section~\ref{prel} correspond to the applications of the \emph{first moment} and \emph{second moment} methods, respectively. In contrast, the baseline $\delta_B(G) \ge \lfloor d/4 \rfloor$ arises from a perfect matching argument, representing a global structural phenomenon.

The derandomization of the Lov\'asz Local Lemma proved to be exceptionally challenging. Following the preliminary breakthroughs by Beck \cite{BeckLLL}, Moser and Tardos successfully achieved this derandomization for a large class of problems in single-player settings (such as graph colorings) in their seminal work \cite{MT}.

For games, Beck \cite{BB} formulated the so-called \emph{Neighborhood conjecture}. Although this conjecture was subsequently refuted in its general form by Gebauer \cite{Geb}, several weaker variants remain wide open (cf. \cite{BB}).

Perhaps the most accessible open variants are the degree games, which remain unsolved even when restricted to the class of $d$-regular graphs. Consequently, as noted by J\'ozsef Beck \cite{BB}, they occupy a prominent place among the most challenging open problems in the field.

If the edges of a graph $G$ (not necessarily $d$-regular) are colored uniformly at random via independent coin tossing, a standard application of the Lov\'asz Local Lemma guarantees that there exists a $2$-coloring in which the maximum color discrepancy at any vertex is less than $c\sqrt{\Delta \log \Delta}$, where $\Delta$ denotes the maximum degree of $G$. Hence, it is plausible to conjecture the following:

\begin{conjecture} (Folklore) \label{nirvana}
For any $d$-regular graph $G$, $\delta_B(G) \ge d/2 - c\sqrt{d\log d}$ holds for some positive constant $c$.
\end{conjecture}

\subsection{The d-dimensional hypercube}
Arguably, the most intriguing question in the field concerns the behavior of $\delta_B(G)$ when $G$ is a $d$-regular graph. A natural first step towards settling Conjecture~\ref{nirvana} is to gain a deeper understanding of the hypercube graph $Q_d$, which is formed by the vertices and edges of the $d$-dimensional hypercube. With its $2^d$ vertices and $d \cdot 2^{d-1}$ edges, $Q_d$ lies precisely on the ``border'' between sparse and dense graphs.

The aforementioned bound $\delta_B(G) \ge \lfloor d/4 \rfloor$ yields a lower bound for $Q_d$ as well. On the other hand, Beck's weight-function method yields the bound $\delta_B(G) \ge d/2 - \sqrt{d\log_2 |V(G)|}$ for any $d$-regular graph $G$. However, this latter bound becomes vacuous for the hypercube graph $Q_d$, yielding only the trivial estimate $\delta_B(Q_d) \ge 0$.

In fact, no one managed to get even $\delta_B(G) \geq \lfloor (1/4 +\epsilon)d \rfloor$  for an $\epsilon > 0$ in general in the last forty years. This is very frustrating since there are such two-colorings of the edges in any $d$-regular graph $G$, in which the difference of the numbers of colors is at most two at all vertices, as we mentioned above.

A related structural game on the hypercube was investigated by Naimi and Sundberg \cite{NS}, who analyzed pairing strategies where the winning sets are subcubes of $Q_d$. While their work focuses on blocking global subcube achievements, our framework addresses the complementary challenge of defending local vertex degrees under dynamic play.

If the graph were even sparser---for instance, the infinite $d$-regular tree $\mathcal{T}_d$---one would obtain $\delta_B(\mathcal{T}_d) = \lfloor (d-1)/2 \rfloor$ (according to Balogh \cite{Btree}) via a pairing strategy applied to the edges leading to the next level of a rooted tree.

In this paper, we improve the classical $\lfloor d/4 \rfloor$ baseline to $\lfloor d/3 \rfloor$ for a prominent class of well-known $d$-regular graphs, including the hypercube graph $Q_d$.

\section{Results}

For the hypercube graph $Q_d$ and other prominent infinite families of $d$-regular graphs, we establish explicit lower bounds that strictly outperform the classical $\lfloor d/4 \rfloor$ baseline.

\begin{figure}[htbp]
	\centering
	\includegraphics[scale=0.48]{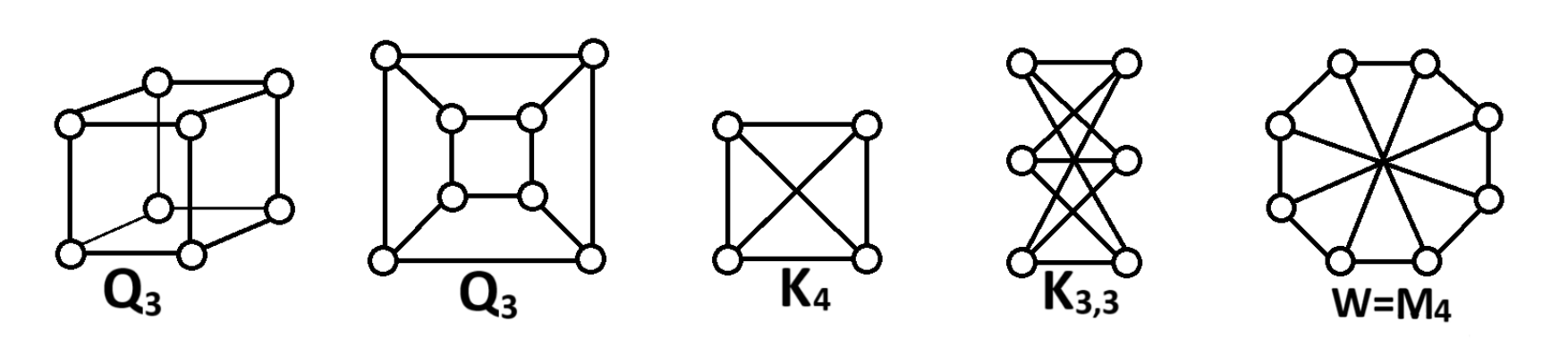}
	\caption {Some Breaker winner cubic graphs: $Q_3$ (twice), $K_4$, $K_{3,3}$, $W=M_4$}
	\label{Bwin}
\end{figure}

\begin{lemma} \label{cube} 
	Breaker has a winning strategy in the $1$-degree game played on the $3$-dimensional hypercube graph $Q_3$; that is, $\delta_B(Q_3) = 1$.
\end{lemma}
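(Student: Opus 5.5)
The plan is a direct strategy description for Breaker, organised around the symmetry of $Q_3$, with the required case analysis kept small by an invariant. Maker moves first, and Maker wins only by claiming all three edges at some vertex. So the only real danger is a \emph{double threat}. A vertex $x$ is \emph{threatened} if Maker owns two of its edges, Breaker owns none, and the third edge is free. A double threat is a Maker move that creates two threatened vertices at once, or one threatened vertex whose free edge is not Breaker's best reply.

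The first step is to normalise Maker's opening with the automorphism group of $Q_3$, which has order $48$ and is edge-transitive. So without loss of generality Maker's first edge is $e_1=\{000,001\}$. Breaker's reply should be chosen to be as far from $e_1$ as possible while starting a near-perfect matching. My first choice is the antipodal parallel edge $\{110,111\}$, so that the two claimed edges lie in the same parallel class of the $1$-factorization of $Q_3$ into three perfect matchings. The stabiliser of this configuration is still large: it contains the reflections in the first two coordinates and the swap of the first two coordinates. This keeps the number of inequivalent second Maker moves to a handful, which can be enumerated.

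Next I would maintain the following invariant after each Breaker move. Every vertex with no Breaker edge has at most one Maker edge. Moreover, no free edge joins two vertices that both have no Breaker edge and exactly one Maker edge. Under this invariant, any Maker move $uv$ threatens at most one endpoint, say $u$, and Breaker's reply is the forced free edge $uw$ at $u$. If no threat is created, Breaker instead claims a free edge that covers a vertex breaking a potential pair of dangerous vertices, preferring edges that touch two uncovered vertices. The key point to check is that Breaker's reply, whether forced or free, restores the invariant. That is, the new dangerous vertex $v$ (the other endpoint of Maker's edge) is not joined by a free edge to another dangerous vertex, or else Breaker's reply can simultaneously cover one of them.

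The main obstacle is exactly this verification in the early middle game: Maker's second and third moves, when few vertices are covered by Breaker. In that phase a forced reply at $u$ might leave $v$ adjacent, via a free edge, to another dangerous vertex. If the invariant cannot be maintained uniformly, I would fall back to an explicit game-tree analysis up to symmetry. After Breaker's first reply I would list the inequivalent Maker second moves: same parallel class, or a different class, adjacent to $e_1$ or to Breaker's edge or to neither. For each I would specify Breaker's reply and show that after Breaker's third edge the uncovered vertices number at most two, and that they are separated so that Maker cannot threaten both. Once Breaker has covered six vertices with three edges, the remaining at most two uncovered vertices each still have at least one free edge. A simple pairing of those free edges then finishes the game. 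The count also works out: Breaker gets $6$ of the $12$ edges, more than the $4$ needed for a perfect matching, so there is slack for forced replies.
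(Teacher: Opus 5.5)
\textbf{There is a genuine gap, and it appears at your very first move: the antipodal parallel reply $\{110,111\}$ loses.} Take the face with vertices $000,001,011,010$. It contains $e_1$ and is disjoint from $\{110,111\}$. Maker wins as follows:
\begin{itemize}
\item Maker plays $\{000,010\}$, which threatens $000$, so Breaker must take $\{000,100\}$.
\item Maker plays $\{001,011\}$, which threatens $001$, so Breaker must take $\{001,101\}$.
\item Maker plays $\{010,011\}$. This threatens $010$ (last edge $\{010,110\}$) and $011$ (last edge $\{011,111\}$) at the same time. Breaker can block only one, and Maker completes the other.
\end{itemize}
Every Breaker move in this line is forced, so your invariant cannot be restored. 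After Breaker's third edge the uncovered vertices are exactly $010$ and $011$, which is at most two as you wanted. But they are joined by a free edge, and that is exactly the configuration your closing pairing must exclude. Your fallback enumeration would therefore find a Maker win for this opening, not a Breaker win.

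The same forcing chain works on either $4$-face through $e_1$, namely $\{000,001,011,010\}$ and $\{000,001,101,100\}$, whenever Breaker's first edge misses that face. This is because all of Breaker's forced blocks are edges leaving the face. An edge meeting both faces must contain $000$ or $001$, since no edge joins $\{010,011\}$ to $\{100,101\}$. So your heuristic of replying ``as far from $e_1$ as possible'' is backwards. Breaker's first reply must share an endpoint with $e_1$, and all four such edges are equivalent under the symmetries fixing $e_1$. This is exactly the paper's normalisation. From there the paper:
\begin{itemize}
\item enumerates Maker's seven non-isomorphic second moves;
\item prunes them with the dead/alive reduction of Lemma~\ref{jobb};
\item after two or three rounds, exhibits an explicit pairing on the remaining free edges.
\end{itemize}
Finally, your count of $6$ Breaker edges against $4$ for a perfect matching is not an argument. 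Maker's chains of single threats dictate where Breaker's edges go, so that apparent slack is not available.
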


Our main result establishes that the $\lfloor d/4 \rfloor$ bound can be strictly improved to $\lfloor d/3 \rfloor$ for the entire family of hypercube graphs. We reformulate Theorem~\ref{maint}:

\begin{theorem} \label{d-cube}
	For any $d$-dimensional hypercube graph $Q_d$, we have $\delta_B(Q_d) \ge \lfloor d/3 \rfloor$.
	
\end{theorem}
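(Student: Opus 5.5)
The plan is to decompose the edge set of $Q_d$ into edge-disjoint copies of $Q_3$ and of smaller cubes. Breaker then plays the strategy of Lemma~\ref{cube} independently in each piece, always answering in the piece where Maker just moved.

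Write $d = 3k + r$ with $k=\lfloor d/3\rfloor$ and $0\le r\le 2$. Split the coordinates $\{1,\dots,d\}$ into blocks $B_1,\dots,B_k$ of size $3$, plus a leftover block $R$ of size $r$. For each block $B_j$ and each fixed assignment of the coordinates outside $B_j$, the edges of $Q_d$ whose direction lies in $B_j$ and which join vertices agreeing with that assignment form a copy of $Q_3$. This is the Cartesian product decomposition $Q_d\cong Q_3\square Q_{d-3}$, applied repeatedly. The edges in directions from $R$ form copies of $Q_r$, and Breaker may ignore them. Every edge of $Q_d$ lies in exactly one such piece. Each vertex $v$ lies in exactly one $Q_3$-copy for each block $B_j$, namely the one obtained by fixing all coordinates of $v$ outside $B_j$. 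So $v$ lies in exactly $k$ of the $Q_3$-copies, and its degree there is $3$ in each.

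Breaker's strategy is the standard one for a disjoint union of games. When Maker claims an edge in some piece $H$, Breaker responds in $H$ according to the winning strategy of Lemma~\ref{cube} for the $1$-degree game on $H\cong Q_3$. If $H$ is a leftover $Q_r$-piece, or if $H$ has no free edge left, Breaker claims an arbitrary free edge. We then treat that edge as a phantom move in its own piece: if it lies in a $Q_3$-copy, Breaker regards it as his own extra move there. A strategy for a Maker-Breaker game is never harmed by Breaker owning an extra edge, by the usual monotonicity argument. Breaker pretends the extra edge was chosen arbitrarily earlier, and if his strategy later tells him to take an edge he already owns, he plays another arbitrary free edge instead.

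Within each $Q_3$-copy, Maker moves first in every exchange, so the play there is exactly the game of Lemma~\ref{cube} with Maker starting. The only change is that Breaker may hold extra edges, which cannot hurt him. So in every $Q_3$-copy Breaker ends up owning at least one edge at each of its $8$ vertices. A vertex $v$ lies in $k$ edge-disjoint copies, so Breaker owns at least $k=\lfloor d/3\rfloor$ edges at $v$. This gives $\delta_B(Q_d)\ge\lfloor d/3\rfloor$.

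The main obstacle is making the bookkeeping of the combined strategy rigorous. Two points need care: that Lemma~\ref{cube} is applied with the correct move order (Maker first), and that the extra-edge monotonicity is justified when Breaker is forced to play outside the piece Maker used. The substantive difficulty, however, sits entirely in Lemma~\ref{cube}, which we take as given.
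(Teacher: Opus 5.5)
Your proposal is correct and follows essentially the same route as the paper: the paper writes $Q_d$ as the Cartesian product of $\lfloor d/3\rfloor$ copies of $Q_3$ and a factor $Q_r$, then combines Lemma~\ref{cube} with the product lemma (Lemma~\ref{product}), under which Breaker plays independently in each layer. Your version unrolls that lemma into an explicit edge-decomposition of $Q_d$ into $Q_3$-copies, and it is more careful than the paper about move order, Maker's moves in $Q_r$-pieces or in already-filled pieces, and the extra-edge monotonicity.
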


One can also consider infinite grid graphs or toroidal grid graphs. Let $\mathbb{Z}^d$ denote the infinite $d$-dimensional grid graph, and let $\mathbb{T}^d_n$ denote the $d$-dimensional toroidal grid graph with $n^d$ vertices, obtained by identifying the opposite sides of a finite grid.

\begin{theorem} \label{d-grid}
	For the infinite $d$-dimensional grid graph $\mathbb{Z}^d$ and the $d$-dimensional toroidal grid graph $\mathbb{T}^d_n$ with $n \ge 2$ (both graphs being $2d$-regular), we have
	\[ \delta_B(\mathbb{Z}^d) \ge \left\lfloor \frac{2d}{3} \right\rfloor \quad \text{and} \quad \delta_B(\mathbb{T}^d_n) \ge \left\lfloor \frac{2d}{3} \right\rfloor. \]
\end{theorem}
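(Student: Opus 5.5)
The plan is to reduce Theorem~\ref{d-grid} to Lemma~\ref{cube} with the same product-decomposition idea used for Theorem~\ref{d-cube}. The game will be split into edge-disjoint subgames that Breaker plays in parallel. Whenever Maker claims an edge in some subgame, Breaker answers inside that same subgame according to that subgame's strategy. If no useful reply exists there (the subgame is full, or its strategy needs no move), Breaker claims an arbitrary free edge. Extra edges never hurt Breaker, so Breaker's final degree at a vertex is at least the sum of the guarantees from the subgames containing it. On the infinite graph $\mathbb{Z}^d$ this is still a legitimate strategy, since every Maker move lies in exactly one subgame and each subgame's strategy is local.

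\textbf{Step 1 (the basic block $\mathbb{Z}^3$).} Fix three coordinate directions. Call an edge in direction $i$ \emph{even} if it joins $x_i=2m$ and $x_i=2m+1$, and \emph{odd} otherwise. The even edges in the three directions form vertex-disjoint copies of $Q_3$, namely the unit cubes $\prod_{i}\{2a_i,2a_i+1\}$. The odd edges form the shifted cubes $\prod_i\{2a_i+1,2a_i+2\}$. Hence the edge set of $\mathbb{Z}^3$ is a disjoint union of copies of $Q_3$, and every vertex lies in exactly two of them. Playing the strategy of Lemma~\ref{cube} in each copy gives Breaker degree at least $1+1=2$ at every vertex, so $\delta_B(\mathbb{Z}^3)\ge 2$.

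\textbf{Step 2 (splitting the $d$ directions).} Partition the $d$ coordinates into $\lfloor d/3\rfloor$ triples and a remainder $r\in\{0,1,2\}$. The edges in the directions of a fixed triple form disjoint copies of $\mathbb{Z}^3$, so by Step~1 each triple contributes $2$ to Breaker's degree at every vertex. This already yields $2\lfloor d/3\rfloor$, which equals $\lfloor 2d/3\rfloor$ when $r\in\{0,1\}$. When $r=2$ we need one more edge. The two leftover directions span disjoint copies of $\mathbb{Z}^2$, which is $4$-regular. Orienting every edge in the positive coordinate direction gives out-degree $2$, and pairing the two out-edges at each vertex is exactly the pairing strategy of Observation~\ref{d4}. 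It gives one more Breaker edge at every vertex, for a total of $2\lfloor d/3\rfloor+1=\lfloor 2d/3\rfloor$.

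\textbf{Step 3 (tori).} For $\mathbb{T}^d_n$ with $n$ even, the parity classes are well defined around each cycle $C_n$, so Steps~1--2 go through verbatim. The positive orientation for the $r=2$ part also works on the torus. The case $n=2$ needs a remark: there $C_2$ degenerates to a single edge or a double edge. If it is read as a simple graph the torus is just $Q_d$, so the bound follows as in Theorem~\ref{d-cube}, up to the degree convention. The main obstacle I expect is odd $n$. There an odd cycle cannot be split into alternating even and odd edges, so the decomposition into unit cubes leaves a ``seam'' at the wrap-around. My first attempt would be to tile each $C_n$ direction by $(n-1)/2$ alternating pairs plus one seam edge, so that the seam edges of a triple form a lower-dimensional structure. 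Away from the seam, vertices are covered twice by $Q_3$ copies. Along the seam, I would show that the defect can be repaired by combining one $Q_3$ copy with a pairing strategy, as in Observation~\ref{d4}, on the seam edges. If that fails, the fallback is to prove a variant of Lemma~\ref{cube} for the prism-like graphs that appear at the seam, such as $C_3\square K_2$, by a direct case analysis in the spirit of the proof of Lemma~\ref{cube}.
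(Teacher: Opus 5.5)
Your Steps~1--2, and Step~3 for even $n\ge 4$, are the paper's argument and they are correct. The paper likewise splits each three-direction block by parity into two families of unit cubes, with every vertex in one cube of each family. It then uses Lemmas~\ref{disj} and~\ref{union} to get the value $2$, Lemma~\ref{product} over the $\lfloor d/3\rfloor$ triples, and pairing on the remainder.

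\textbf{The gap: odd $n$.} Here you give a plan rather than a proof, and the plan cannot work as described. Pairing on the wrap edges alone is hopeless, because a typical deficient vertex (one coordinate in $\{0,n-1\}$, the others interior) meets only one wrap edge.

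Allowing all non-cube edges does not help either, for any placement of edge-disjoint cubes. A vertex lying in $j\le 2$ cubes has $6-3j$ leftover edges. It needs $2-j$ pairs of its own, i.e.\ $4-2j$ leftover edges, and $4-2j>\tfrac12(6-3j)$ whenever $j<2$. Summing over vertices, the demand exceeds the number of leftover edges unless every vertex lies in two cubes. That would mean the cubes decompose all $3n^3$ edges, which forces $4\mid n^3$.

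Your fallback does not rescue the scheme. Since $n^3$ is odd, $\mathbb{T}^3_n$ has no spanning $3$-regular subgraph at all. So it cannot be split into two edge-disjoint spanning disjoint unions of cubic Breaker-win graphs, which is what Lemmas~\ref{union} and~\ref{disj} require. Within that scheme, replacing $Q_3$ by prisms such as $C_3\square K_2$, or by any other cubic graph, changes nothing; a genuinely new ingredient is needed.

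The paper's own proof has exactly this hole. Its partition ``based on the parity of its localized grid coordinates'' into two disjoint unions of cubes exists only for even $n$; for odd $n$ it would be a decomposition of $3n^3$ edges into $12$-edge cubes. So you have located a real defect, not missed an idea the paper supplies.

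\textbf{The case $n=2$.} Your remark here is wrong. Read as a simple graph, $\mathbb{T}^d_2=Q_d$ is $d$-regular, and Theorem~\ref{d-cube} only gives $\lfloor d/3\rfloor$. The claimed $\lfloor 2d/3\rfloor$ is in fact false in that reading. In $Q_3$ Maker ends with $6$ of the $12$ edges, so some vertex has Maker-degree at least $2$, and hence $\delta_B(Q_3)=1<2$. The case $n=2$ is consistent with the asserted $2d$-regularity only with doubled edges. In that reading the parity split gives two edge-disjoint copies of $Q_3$ on the same eight vertices of each block, and your Step~1 goes through via Lemma~\ref{union}.
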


There exist some other small-order $3$-regular graphs on which Breaker also has a winning strategy in the $1$-degree game.

\begin{observation} \label{small} 
	We have $\delta_B(G) \ge 1$ if $G$ is one of the following $3$-regular graphs: the complete graph $K_4$, the Wagner graph $W$ (also known as the $4$-Möbius ladder, $M_4$), or the complete bipartite graph $K_{3,3}$, as Figure \ref{Bwin} illustrates.
\end{observation}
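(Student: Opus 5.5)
The plan is to handle each of the three graphs separately with an explicit Breaker strategy, assuming (as in Lemma~\ref{cube}) that Maker moves first. In the $1$-degree game on a cubic graph Maker wins exactly when he claims all three edges at some vertex, so Breaker only has to touch every vertex once.

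A pure pairing strategy is not available. Covering each vertex by a pair of its own edges, with the pairs disjoint, would need $2|V|$ edges, but a cubic graph has only $3|V|/2$. So the strategy has to be adaptive. I would organise it around a \emph{threat} invariant. Call a vertex \emph{dangerous} if Breaker owns none of its edges and Maker owns two of them. Breaker's rule is then:
\begin{itemize}
\item Immediately after each Maker move, if a dangerous vertex exists, Breaker claims its last free edge.
\item Otherwise Breaker claims an edge joining two vertices that he has not yet touched, chosen to reduce the number of untouched vertices as fast as possible.
\end{itemize}
The key invariant to maintain after every Breaker move is that no vertex is dangerous. A single Maker move raises the Maker-degree of only two vertices, $u$ and $v$. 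Hence the only failure mode is that $u$ and $v$ both become dangerous at once and their last free edges differ. The whole difficulty is to choose Breaker's ``free'' moves so that this double threat never arises.

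For $K_4$ the argument is short. After Maker's first edge $uv$, Breaker takes the disjoint edge $xy$, so only $u$ and $v$ remain untouched. Each of them has exactly two free edges left, going to $\{x,y\}$. Any Maker edge at $u$ creates at most one threat, which Breaker answers. The same then happens at $v$; if instead Maker's move touches $v$ first, Breaker symmetrically answers there. Either way Breaker covers both vertices.

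For $K_{3,3}$ and $W=M_4$ I would exploit the symmetry to reduce Maker's first move to one case: $K_{3,3}$ is edge-transitive, and $W$ has two edge orbits, the rim edges and the spokes. Breaker answers by claiming an edge that is vertex-disjoint from Maker's edge and chosen so that the two endpoints of Maker's edge have no common neighbour in the remaining uncovered set. This is the configuration that would let one Maker edge later create two simultaneous threats. Breaker then continues to cover untouched vertices in a perfect-matching-like fashion: aim at a perfect matching of the graph that avoids Maker's edges (there are three such matchings in $K_{3,3}$ and several in $W$), and by Hall-type reasoning at least one such matching survives each Maker move. Whenever Breaker is not forced to answer a threat, he claims an edge of a surviving perfect matching. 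In that way every vertex is eventually covered.

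The main obstacle is exactly the double-threat situation. It occurs when Maker claims an edge $uv$ while both $u$ and $v$ are untouched and each already has one Maker edge. I expect the proof to go through a short case analysis, up to automorphism, of Maker's first two or three moves, checking that Breaker's matching choice forbids such a position. If this analysis becomes unwieldy, especially for $W$, I would fall back on an exhaustive game-tree search. This is easy: the games have only $6$, $9$ and $12$ edges, and the search can be reduced further by automorphisms. I would present it as a table of Breaker responses, in the spirit of Figure~\ref{Bwin}.
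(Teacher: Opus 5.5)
Your $K_4$ argument is correct, and the threat bookkeeping is sound: one Maker move raises the Maker-degree of only two vertices, so Breaker can lose only to a double threat. The gap is in $K_{3,3}$ and $W$. There, the one concrete instruction you give, answering Maker's first edge with a vertex-disjoint edge, is a losing move.

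In $K_{3,3}$ with sides $\{a_1,a_2,a_3\}$ and $\{b_1,b_2,b_3\}$, after $a_1b_1$ every disjoint reply is equivalent to $a_2b_2$, and Maker answers $a_3b_3$. Now $a_1,b_1,a_3,b_3$ are untouched and span a $4$-cycle on which Maker owns two opposite edges. So $a_1b_3$ and $a_3b_1$ are both double-threat edges, and the only edges meeting both $\{a_1,b_3\}$ and $\{a_3,b_1\}$ are Maker's own; Maker wins.

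In $W$ (rim $0,1,\dots,7$, spokes $\{i,i+4\}$, writing $ij$ for $\{i,j\}$), after Maker's $01$ every disjoint reply loses:
\begin{itemize}
\item a reply missing $\{4,5\}$ loses at once to $45$, via the $4$-cycle $0,1,5,4$;
\item the reply $45$ loses to $23$: Breaker must neutralise the double-threat edge $12$, then $37$ forces $34$, and $07$ threatens $0$ and $7$ simultaneously;
\item the reply $34$ loses to $56$ (Breaker must neutralise $15$), then $67$ forcing $26$, then $07$; the reply $56$ is symmetric.
\end{itemize}
Your ``no common neighbour'' criterion cannot see any of this, since both graphs are triangle-free. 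What Breaker must destroy are untouched $4$-cycles, and forcing sequences in which the forced answer to a single threat leaves a double threat behind. The perfect-matching/Hall paragraph is not an argument either: the survival of some matching avoiding Maker's edges does not let Breaker claim all of it while he is also making forced replies.

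The missing idea is to reply \emph{adjacent} to Maker's first edge, killing one of its endpoints, as the paper does for $Q_3$.

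In $K_{3,3}$, after $a_1b_1$ and $a_1b_2$, Maker's second moves reduce (swapping $a_2,a_3$) to $a_2b_1$, $a_2b_3$, $a_1b_3$, $a_2b_2$. Breaker answers the first with $a_3b_1$ and the others with $a_2b_1$. The two remaining untouched vertices are then covered by disjoint pairs of free edges.

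In $W$, answer a rim opening $01$ with $04$, and a spoke opening $04$ with $01$. After discarding dead--alive moves by Lemma~\ref{jobb}, Maker has seven second moves in each case. Every branch ends in a pairing position, in a couple of branches only after one more forced round. Against the spoke opening, the answer $34$ works in six of the seven branches.

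The paper only asserts this case analysis, and your exhaustive-search fallback would be a legitimate way to supply it. As written, however, your proposal proves the statement only for $K_4$, and the strategy it describes for $K_{3,3}$ and $W$ is refuted by the lines above.
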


Breaker's winning strategies for these graphs can be straightforwardly verified by the reader; hence, to keep the exposition concise, we omit the explicit case-by-case analysis. While the games on $K_4$ and $K_{3,3}$ are trivial, the analysis for $W$ involves 14 subcases with 26 subsubcases. Instead, we present a full, detailed case analysis for the hypercube graph $Q_3$ in Section 3.

\begin{figure}[htbp]
	\centering
	\includegraphics[scale=0.45]{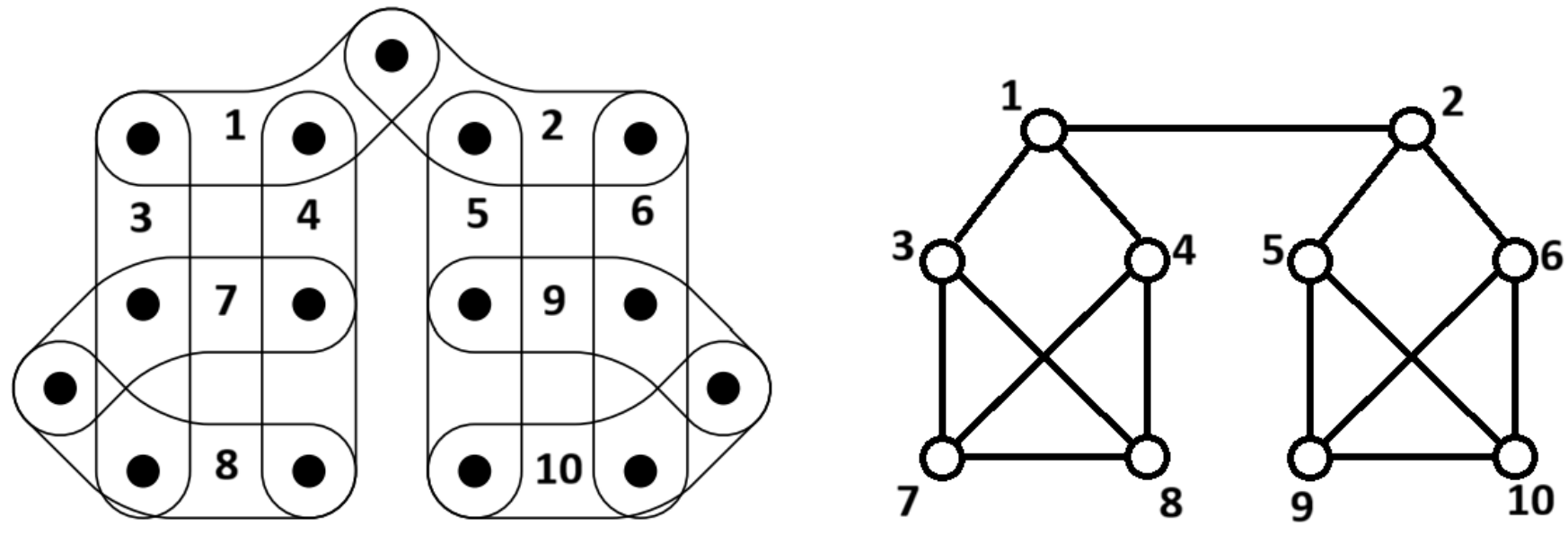}
	\caption {A Maker winner hypergraph of Knox \cite{Knox} and its graph version.}
	\label{kno}
\end{figure}

There exist several $3$-regular graphs on which Breaker cannot win the $1$-degree game as well; that is, at the end of the game, Breaker's induced subgraph will contain an isolated vertex. A classic example of such a structure can be derived from a Maker winner 3-uniform hypergraph of Knox \cite{Knox}. 

Note that the $1$-degree game played on a graph $G$ can be naturally formulated as a positional game on a hypergraph $H$, where the winning sets (hyperedges) of $H$ correspond to the vertices of $G$, and the ground set (vertices) of $H$ consists of the edges of $G$. Under this framework, Maker wins the $1$-degree game on $G$ (by claiming all edges incident to some vertex of $G$) if and only if Maker wins the corresponding hypergraph game on $H$ by fully claiming all vertices of a hyperedge. Conversely, Breaker wins the $1$-degree game on $G$ (by securing at least one edge at every vertex of $G$) if and only if Breaker wins the hypergraph game on $H$ by claiming at least one vertex in every hyperedge of $H$. 
To clarify this dual structure, Figure~\ref{kno} illustrates both the original $3$-uniform hypergraph introduced by Knox \cite{Knox} for the hypergraph game and the corresponding $3$-regular graph derived from it for the degree game.

\begin{observation} \label{hexa} 
	Maker can claim all three edges incident to some vertex in the $1$-degree game played on either the hexagonal lattice graph, the $5$-Möbius ladder $M_5$, or the graph derived from Knox's counterexample hypergraph \cite{Knox}. Furthermore, we conjecture that the same holds for the Petersen graph.
\end{observation}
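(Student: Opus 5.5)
The plan is to give explicit Maker strategies built on \emph{forcing threats}. Say a vertex $x$ is a \emph{threat} if Maker owns two of its three edges and the third is free. After Maker creates a threat, Breaker must claim the remaining edge at $x$ at once; otherwise Maker claims it next and isolates $x$ in Breaker's graph. Maker wins as soon as one move creates two threats simultaneously. The generic tool is a \emph{forcing path} that closes into a cycle. Maker claims $uv$ and gets one unforced Breaker reply. From then on Maker extends the path $u=x_0,x_1,\dots,x_k$ by claiming $x_{i}x_{i+1}$. This makes $x_i$ a threat, provided its third (pendant) edge is still free, so Breaker's reply is forced onto that pendant edge. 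Each new endpoint $x_{i+1}$ has one Maker edge and two free edges, so the chain continues with Maker always holding the initiative. If $x_k$ is adjacent to $u$, Maker claims $x_ku$. Provided the pendant edges of $u$ and $x_k$ are free, this creates threats at both $u$ and $x_k$, and Maker wins.

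For the hexagonal lattice I would carry this out on a hexagonal face. Maker first claims an arbitrary edge $uv$, and Breaker answers with some edge $f$. The edge $uv$ lies on two hexagonal faces, and for each face Maker may choose which endpoint plays the role of $u$ (the start of the path), giving traversal directions. For a given face and direction, the only edges that must be free are: the five further cycle edges, the pendant edges of the path's interior vertices (these become Breaker's forced replies), and the pendant edges at the two endpoints $u$ and $x_5$ that are used at closure. I would check that a single edge $f$ can lie in this "needed" set for at most some of the (face, direction) choices, so at least one choice avoids $f$. The check is a short inspection, since $f$ near $uv$ can only touch edges adjacent to one of the two faces. With that choice fixed, Maker plays five forcing moves around the hexagon and closes it with a double threat. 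Infinitude of the lattice is not even needed here, only the local structure of two adjacent hexagons.

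For the $5$-Möbius ladder $M_5$ and the graph derived from Knox's hypergraph, I would apply the same scheme with short cycles. For $M_5$ these are the $4$-cycles formed by two consecutive rungs. For Knox's graph I would translate Knox's hypergraph Maker strategy \cite{Knox} directly through the duality described above: a Maker win on $H$ is exactly a vertex of the graph all of whose edges Maker owns.

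The main obstacle is the finite graphs. There, Breaker's single free reply after Maker's first move can hit the pendant edge at the start vertex or a cycle edge for \emph{every} available short cycle through $uv$. Also, in a $4$-cycle the pendant edges of consecutive vertices may coincide with edges of neighbouring structure, for example a rung shared by two $4$-cycles. I would first try choosing Maker's opening edge to be a rung of $M_5$, since it lies on two $4$-cycles, and then case-split on Breaker's reply up to the graph's symmetry. Any case in which closing a $4$-cycle fails would be handled by extending the forcing path along the ladder to a longer cycle; $M_5$ has $8$-cycles and the outer $10$-cycle. Because the forced replies are determined, this case analysis stays finite and small.
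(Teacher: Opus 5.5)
\textbf{There is a genuine gap.} Breaker's most natural reply, an edge adjacent to Maker's first edge, defeats your scheme on both the hexagonal lattice and $M_5$.

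\emph{Hexagonal lattice.} For a face, the set of edges that must be free does not depend on the direction. Every hexagon vertex is either interior to the path or one of its two endpoints, so all six pendant edges are needed. If Maker claims $uv$ and Breaker answers $ua$, then $ua$ is a cycle edge of one face through $uv$ and the pendant edge at $u$ of the other. So all four (face, direction) choices fail, and your ``short inspection'' is false for exactly the four edges adjacent to $uv$.

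\emph{$M_5$.} The same happens here: the needed sets of the two $4$-cycles through a rung meet exactly in the four edges adjacent to that rung.

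\emph{Why longer cycles cannot help.} Once $u$ carries a Breaker edge, every vertex on a forcing path except its current endpoint is dead. All other alive vertices carry no Maker edge. So each Maker move creates at most one threat, and no double threat ever arises, whatever the cycle length.

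\textbf{The missing idea} is a second \emph{quiet} Maker move. It creates another alive vertex carrying one Maker edge. You then must check that Breaker's second free reply cannot meet the needed sets of all resulting forcing lines. The paper's sketch for Knox's graph does exactly this with the move $(7,8)$. Your needed-set check is still useful, because it shows that Breaker is \emph{forced} to answer adjacently.

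\emph{Fix for $M_5$.} Take the $10$-cycle $0,\dots,9$ with rungs $\{i,i+5\}$, and write $ij$ for $\{i,j\}$. Maker opens $05$. Breaker must take one of the four adjacent edges, which are all equivalent under symmetry, say $01$. Maker then plays $38$. Consider the Maker lines $54,43$; $56,67,78$; and $32,27,78$, with Breaker's replies forced and the last move a double threat. Their needed sets are
\[
\{45,56,34,49,23\},\quad \{56,45,67,16,78,27,89\},\quad \{23,34,27,12,78,67,89\}.
\]
These three sets have empty common intersection, so Maker wins.

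\emph{Fix for the hexagonal lattice.} After the adjacent reply, Maker plays a second edge $wz$ far away. Breaker must again answer adjacently to $wz$, since otherwise the hexagon argument at $wz$ wins. Maker then runs a long forcing path from the surviving endpoint of $uv$ to a neighbour of the surviving endpoint of $wz$, and closes it with a double threat. So, contrary to your remark, the room in the lattice is genuinely used.

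Your treatment of Knox's graph via the duality is fine and agrees with the paper.
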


On Knox's graph, Maker wins by first choosing edge $(1,2)$; then, if Breaker responds for example on the right side, Maker wins the game in three moves by choosing edge $(7,8)$ regardless of what Breaker plays.
Similarly, on the hexagonal lattice graph and on the $5$-Möbius ladder can be easily recovered through a manual inspection, which we omit here, because these do not play an important role in our paper.

We note that in the ``Toucher and Isolator'' game \cite{TI}, another explicit example of a Maker-winning cubic graph is identified, where Isolator (Maker) can force a vertex isolation on a specific $24$-vertex $3$-regular graph. Notably, this victory is achieved even under a framework where Toucher (Breaker) opens the game, making Maker's objective significantly more challenging.

This concludes the overview of our main theorems, observations, and examples. In the following section, we provide the explicit deterministic strategies and formal proofs for each of these theorems.

\section{Proofs}

\begin{figure}[htbp]
	\centering
	\includegraphics[scale=0.67]{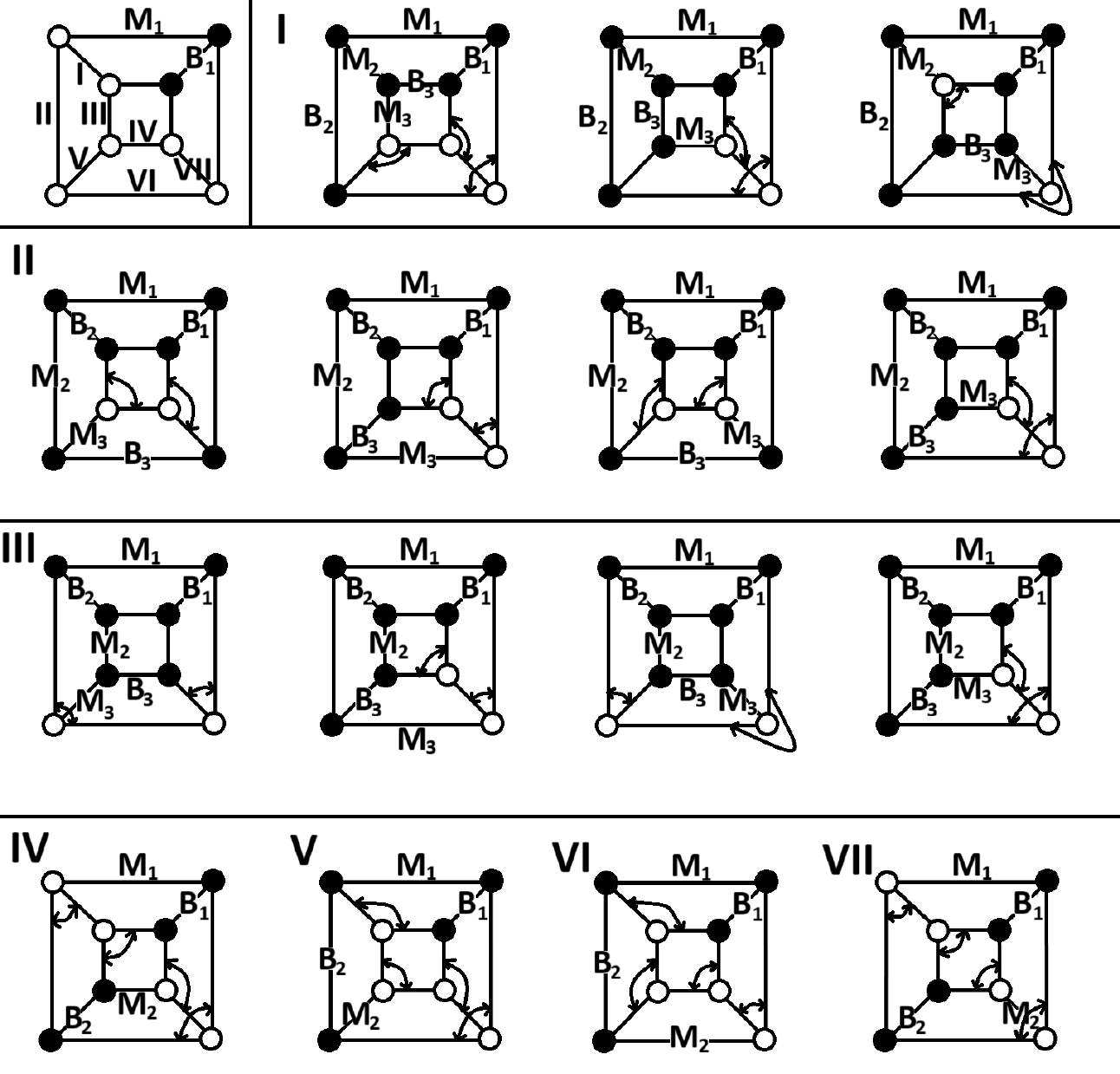}
	\caption {The seven cases}
	\label{Q3pr}
\end{figure}

\begin{proof}[Proof of Lemma~\ref{cube}, i.e.\ $\delta_B(Q_3) = 1$:]
In the context of the $1$-degree game, we define a vertex of $G$ to be \emph{dead} if Breaker has already claimed at least one edge incident to it. 
Conversely, a vertex is called \emph{alive} if all of its incident edges that have been claimed so far belong exclusively to Maker.

Claiming an edge between two dead vertices is completely suboptimal for both players. Furthermore, claiming an edge incident to two alive vertices is always strictly better than claiming an edge incident to exactly one of the previous two alive vertices and one other dead vertex, as formulated in the following straightforward lemma:

\begin{lemma}\label{jobb}
	Suppose that Maker (or Breaker) has a winning strategy by claiming an edge $x$ that has one dead and one alive endpoint. If there exists another unoccupied edge $y$ connecting this alive endpoint to another alive vertex, then Maker (or Breaker) also has a winning strategy by claiming the edge $y$ instead.
\end{lemma}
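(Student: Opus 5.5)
The plan is a swap (strategy-stealing) argument. Let $x=\{u,a\}$ with $u$ dead and $a$ alive, and let $y=\{a,b\}$ be unoccupied with $b$ alive. Let $\sigma$ be the transposition of the edge set of $G$ that exchanges $x$ and $y$ and fixes every other edge. Both $x$ and $y$ are currently free, so $\sigma$ fixes the current position: every edge claimed so far keeps its owner.

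Suppose player $P$ (Maker or Breaker) has a winning strategy $S$ that starts by claiming $x$. Instead, $P$ claims $y$ and then plays the following simulated strategy.
\begin{enumerate}
\item $P$ keeps an \emph{imagined} game, obtained from the real one by applying $\sigma$ to every move. In particular, the imagined game starts with $P$ claiming $x$.
\item Whenever the opponent claims an edge $e$ in the real game, $P$ records $\sigma(e)$ in the imagined game and reads off $S$'s answer $f$ there.
\item $P$ then claims $\sigma(f)$ in the real game.
\end{enumerate}
Because $\sigma$ is a bijection fixing the pre-existing position, legality is preserved in both directions. Hence at the end the real ownership is exactly $\sigma$ applied to the imagined ownership. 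In particular, $P$ owns $y$ really and $x$ imaginarily.

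Now transfer the win vertex by vertex, using that $\sigma$ only affects edges at $u$, $a$ and $b$. For a vertex $v\notin\{u,a,b\}$, the set of edges at $v$ and their owners are the same in both games. At $a$, both $x$ and $y$ are incident, so $\sigma$ permutes the edges at $a$ and preserves the number of edges each player has there.

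\emph{Breaker case.} In the imagined game every vertex has a Breaker edge; we check the three special vertices in the real game.
\begin{itemize}
\item At $u$: $u$ was already dead before this move, via a Breaker edge different from $x$ and $y$, and that edge is unaffected by $\sigma$.
\item At $a$: the real Breaker edge $y$ covers $a$.
\item At $b$: the real Breaker edge $y$ covers $b$ as well.
\end{itemize}
So Breaker wins in reality too.

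\emph{Maker case.} In the imagined game Maker owns all edges at some vertex $v$; we check that $v$ is still fully Maker's in reality.
\begin{itemize}
\item $v\neq u$, since $u$ already carries a Breaker edge.
\item If $v=a$, the count at $a$ is preserved by $\sigma$, so Maker owns all edges at $a$ in reality.
\item If $v=b$, then imagined Maker owns $y$ as well as $x$. Hence real Maker owns $\sigma(x)=y$ and $\sigma(y)=x$, and all other edges at $b$ are unchanged. So $b$ is fully Maker's in reality.
\end{itemize}
So Maker wins in reality too.

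The only delicate point is this bookkeeping at the endpoints $u$ and $b$. It relies precisely on the hypotheses: $u$ is already dead (protected by an edge outside $\{x,y\}$), and $x$ and $y$ are both currently unoccupied, so that $\sigma$ is a symmetry of the current position. I expect the main care to be in writing the simulation so that "$P$ owns $y$ really $\iff$ $P$ owns $x$ imaginarily" is transparent. Once that is in place, the vertex-by-vertex checks above are routine.
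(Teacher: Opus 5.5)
Your proof is correct and follows the same idea as the paper's: claim $y$ in place of $x$, observing that $u$ is already protected by an earlier Breaker edge, $a$ is affected identically, and $b$ can only gain. Your transposition $\sigma$ and imagined game are more precise than the paper's argument, since they specify how the modified strategy continues when the opponent later takes $x$ or $S$ later calls for $y$ (the paper leaves this implicit), and your checks show in passing that only the deadness of $u$ is needed, not the aliveness of $b$.
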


\begin{proof}
	Let $v$ be the common alive endpoint shared by edges $x$ and $y$, let $u$ be the dead endpoint of $x$, and let $w$ be the alive endpoint of $y$ ($w \neq v$). 
	From Breaker's view, claiming the edge $x = \{u,v\}$ can only contribute to neutralizing the vertex $v$. On the other hand, from Maker's side, claiming $x$ helps in accumulating edges at $v$, while $u$ provides no further value since it cannot be fully claimed anymore.
	
	Now suppose a player has a winning strategy $\mathcal{S}$ that dictates playing the edge $x$. We can construct a modified strategy $\mathcal{S}'$ where the player claims the edge $y = \{v,w\}$ instead of $x$. Since $w$ is alive, claiming $y$ affects two alive vertices ($v$ and $w$), whereas claiming $x$ only affects one alive vertex ($v$) and one already dead vertex ($u$). Thus, any strategic advantage gained at $v$ by claiming $x$ is identically preserved by claiming $y$, while the player gains an additional potential advantage at the vertex $w$. It follows that if $\mathcal{S}$ is a winning strategy, $\mathcal{S}'$ must also be a winning strategy.
\end{proof}

Figure~\ref{Q3pr} illustrates the case analysis for the proof of Lemma~\ref{cube}, where $M_i$ and $B_i$ denote the $i$-th move of Maker and Breaker, respectively. Due to the symmetries of $Q_3$, we can assume without loss of generality that Maker starts the game by claiming an arbitrary edge $M_1$, and Breaker responds by claiming an adjacent edge $B_1$. 

Following this initial round, Maker faces seven structurally non-isomorphic options for their second move, labeled from $I$ to $VII$ in the upper-left corner of the figure. Crucially, any vertex incident to Breaker's claimed edge $B_1$ becomes immediately dead. Therefore, according to Lemma~\ref{jobb}, we can significantly reduce the branching of the case analysis: Maker will always prefer to claim an edge connecting two alive vertices over an edge incident to only one of the previous two alive vertices and a dead vertex, whenever such an option is available.

In the first three cases (labeled $I$ to $III$), the analysis branches into three or four subcases based on Maker's third move. However, after at most three rounds for cases $I$--$III$, or just two rounds for cases $IV$--$VII$, Breaker is able to establish a winning pairing strategy on the remaining unclaimed edges. These strategies are explicitly indicated by arrows in Figure~\ref{Q3pr}. To aid readability, empty circles in the diagram denote alive vertices, while solid black circles represent vertices that are already dead at that stage of the game.
\end{proof}

As the core insight of this paper, to extend our results to higher-dimensional geometries (such as $Q_d, \mathbb{Z}^d, \mathbb{T}^d_n$, etc.), we require the following three lemmas. Here, $H \square G$ denotes the Cartesian product of graphs $H$ and $G$, while $H \cup G$ denotes their union.

\begin{lemma} \label{product}
	For any simple graphs $H$ and $G$, we have
	\[ \delta_B(H \square G) \ge \delta_B(H) + \delta_B(G). \]
\end{lemma}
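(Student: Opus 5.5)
The plan is to decompose the edge set of $H \square G$ into edge-disjoint ``layers'' and let Breaker run independent copies of optimal strategies, one per layer. For each $g \in V(G)$, the edges $\{(h,g),(h',g)\}$ with $hh' \in E(H)$ form a copy $H_g \cong H$. For each $h \in V(H)$, the edges $\{(h,g),(h,g')\}$ with $gg' \in E(G)$ form a copy $G_h \cong G$. Every edge of $H \square G$ lies in exactly one of these copies. Every vertex $(h,g)$ lies in exactly one $H$-copy, namely $H_g$, and exactly one $G$-copy, namely $G_h$. Its degree there is $\deg_H(h)$ and $\deg_G(g)$ respectively, and $\deg_{H\square G}(h,g)=\deg_H(h)+\deg_G(g)$. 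Hence if Breaker ends up with at least $\delta_B(H)$ edges at every vertex inside every copy $H_g$, and at least $\delta_B(G)$ edges at every vertex inside every copy $G_h$, then summing the two contributions gives the claimed bound at each $(h,g)$.

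Fix an optimal second-player Breaker strategy $\mathcal{S}_H$ on $H$ and $\mathcal{S}_G$ on $G$. In $\delta_B$ Maker moves first, so Breaker is the second player in every component game. Breaker's strategy on $H\square G$ is a \emph{follow-the-copy} rule. Whenever Maker claims an edge $e$ lying in a copy $C$ (some $H_g$ or $G_h$), Breaker answers inside the same copy $C$ with the edge prescribed by the corresponding strategy $\mathcal{S}_H$ or $\mathcal{S}_G$, applied to the position restricted to $C$. Since Maker's move in $C$ is a legal move of the component game and Breaker answers immediately in $C$, each restricted game is exactly a play of $H$ (or $G$) with Maker moving first and Breaker replying according to an optimal strategy. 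Moves made in other copies do not touch $C$, so they are invisible to it.

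The step requiring care, and the main obstacle, is the bookkeeping of surplus Breaker moves. Breaker's prescribed answer may not exist, because Maker just claimed the last free edge of $C$; or Breaker may be forced to move somewhere while the strategies give no instruction. In that case Breaker claims an arbitrary free edge $f$, lying in some copy $C'$. Breaker then uses the standard Maker--Breaker monotonicity argument: in $C'$ he treats $f$ as an extra edge he owns, and continues following $\mathcal{S}$ in $C'$ as if $f$ were unclaimed. If $\mathcal{S}$ later prescribes $f$, Breaker instead makes another arbitrary move, handled the same way. Owning extra edges can only increase Breaker's degrees, and Maker's edges in $C'$ are exactly those of a legitimate play against $\mathcal{S}$, so the guarantee of $\mathcal{S}$ on $C'$ survives. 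I would state this as a small auxiliary claim: extra Breaker moves never hurt a second-player degree strategy.

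For infinite graphs such as the grids in Theorem~\ref{d-grid}, the same argument works locally, since each move affects only one copy. With the auxiliary claim in place, each copy satisfies its bound at the end of the game, and summing over the two copies through each vertex yields $\delta_B(H\square G)\ge\delta_B(H)+\delta_B(G)$.
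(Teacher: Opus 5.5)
Your proof is correct and takes the same route as the paper. You split $E(H\square G)$ into the edge-disjoint copies of $H$ and $G$, let Breaker run the optimal strategies independently by answering in the copy where Maker just moved, and sum the two contributions at each vertex. Your explicit treatment of surplus Breaker moves via the monotonicity argument (extra Breaker edges never hurt) fills in a detail that the paper's proof passes over silently.
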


\begin{lemma} \label{union}
	If $H$ and $G$ are edge-disjoint graphs on the same vertex set, then 
	\[ \delta_B(H \cup G) \ge \delta_B(H) + \delta_B(G). \]
\end{lemma}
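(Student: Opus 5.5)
The statement to prove is Lemma~\ref{union}: if $H$ and $G$ are edge-disjoint graphs on the same vertex set, then $\delta_B(H \cup G) \ge \delta_B(H) + \delta_B(G)$.

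The plan is a strategy-splitting argument. Fix a strategy $\mathcal{S}_H$ for Breaker in the game on $H$ guaranteeing degree at least $\delta_B(H)$ at every vertex, and similarly $\mathcal{S}_G$ for the game on $G$. Since $E(H\cup G)$ is the disjoint union of $E(H)$ and $E(G)$, every move in the game on $H\cup G$ belongs to exactly one of the two component boards. Breaker runs two virtual games in parallel. When Maker claims an edge of $H$, Breaker treats it as Maker's move in the $H$-game and answers with the edge $\mathcal{S}_H$ prescribes, which lies in $H$. Moves in $G$ are handled symmetrically with $\mathcal{S}_G$.

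Two bookkeeping issues must be handled.

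\emph{Exhausted board.} If $\mathcal{S}_H$ has no reply because $H$ is exhausted, or Maker's move ended the $H$-game, Breaker claims an arbitrary free edge of the other board and regards it as an extra move there.

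\emph{Extra moves.} We need the standard fact that extra moves never hurt Breaker in such a game. The objective only counts Breaker's own edges, so owning more edges can only increase Breaker's degrees. The usual argument: when the strategy later asks for an edge Breaker already owns, Breaker again plays an arbitrary free edge and relabels it as the new extra edge. This keeps the invariant that Breaker's actual edge set on each board contains the edge set of a legitimate play of $\mathcal{S}_H$ (respectively $\mathcal{S}_G$) against the restricted Maker sequence. We should also check that Maker's moves on a board form a valid alternating sequence against those virtual replies. They do, because every Maker move on $H$ is answered on $H$ unless $H$ is full.

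At the end of the game, each vertex $v$ has at least $\delta_B(H)$ Breaker edges in $H$ and at least $\delta_B(G)$ Breaker edges in $G$. Since the two edge sets are disjoint, these counts add, giving $\deg_B(v)\ge \delta_B(H)+\delta_B(G)$.

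The main obstacle is rigorously justifying the extra-move and exhausted-board bookkeeping, i.e.\ that the restricted play on each board is consistent with a genuine play of the chosen strategy. I would state this as a short monotonicity claim: a Breaker strategy stays winning if Breaker is occasionally given additional edges. I would prove it by the relabeling argument above.

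Lemma~\ref{product} then follows from Lemma~\ref{union}, since $H\square G$ is the edge-disjoint union of $|V(G)|$ copies of $H$ and $|V(H)|$ copies of $G$. Each family of copies is a vertex-disjoint union, on which the same splitting argument applies.
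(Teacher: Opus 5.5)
Your proof is correct and uses the same decoupling argument as the paper. Breaker answers each Maker move on the board ($E(H)$ or $E(G)$) where it was made, using $\mathcal{S}_H$ or $\mathcal{S}_G$, and the secured degrees add because $E(H)\cap E(G)=\emptyset$; your extra-move and relabeling treatment of the exhausted-board case fills in bookkeeping that the paper leaves implicit.
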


\begin{lemma} \label{disj}
	Let $G = \bigsqcup_{i \in I} H_i$ be a graph formed by the disjoint union of a (finite or infinite) family of graphs $\{H_i\}_{i \in I}$. Then 
	$\delta_B(G) = \min_{i \in I} \delta_B(H_i). $
	In particular, if $G$ is a disjoint union of $3$-dimensional hypercubes, then $\delta_B(G) \ge 1$.
\end{lemma}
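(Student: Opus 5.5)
The plan is to prove the two inequalities $\delta_B(G)\ge \min_i \delta_B(H_i)$ and $\delta_B(G)\le \min_i\delta_B(H_i)$ separately. Both rest on the same observation: every edge of $G$ lies in exactly one component $H_i$, and every vertex of $G$ has all its incident edges inside one $H_i$. So the degree of a vertex in Breaker's final subgraph depends only on the moves made inside its own component. The ``in particular'' clause then follows at once from Lemma~\ref{cube}, since every component $Q_3$ has $\delta_B(Q_3)=1$.

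For the lower bound, fix for each $i$ a Breaker strategy $\mathcal{S}_i$ on $H_i$ that guarantees degree at least $\delta_B(H_i)$ at every vertex of $H_i$. Breaker plays the strategies in parallel. Whenever Maker claims an edge of $H_i$, Breaker answers inside $H_i$ as $\mathcal{S}_i$ prescribes.

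The main technical point is what to do when this rule breaks down. This happens when $H_i$ has no free edge left, or when the edge prescribed by $\mathcal{S}_i$ has already been taken by Breaker himself. In that case Breaker claims an arbitrary free edge $e$ in some component $H_k$, and from then on regards $e$ as a ``bonus'' edge inside $H_k$. If $\mathcal{S}_k$ later asks for $e$, Breaker again plays an arbitrary free edge and treats it as a new bonus edge. This is the standard extra-move argument. Because owning additional edges can only increase Breaker's degrees, and never spoils a Breaker-winning position, each component's game in Breaker's bookkeeping is a legal play of $\mathcal{S}_i$ against some Maker play, possibly with extra Breaker edges. Hence every vertex of $H_i$ ends with Breaker-degree at least $\delta_B(H_i)\ge\min_j\delta_B(H_j)$.

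For the upper bound, choose $j$ with $\delta_B(H_j)$ minimal. If $H_j$ is finite, the game on $H_j$ is a finite game with perfect information, so it is determined. By the definition of $\delta_B(H_j)$, Maker therefore has a strategy $\mathcal{M}_j$ on $H_j$ that forces some vertex of $H_j$ to end with Breaker-degree at most $\delta_B(H_j)$. Maker follows $\mathcal{M}_j$ in $G$ and plays only in $H_j$, using the symmetric bookkeeping:
\begin{itemize}
\item a Breaker move outside $H_j$ is answered by a free ``bonus'' Maker move in $H_j$;
\item moves demanded by $\mathcal{M}_j$ on edges Maker already owns are replaced by arbitrary moves.
\end{itemize}
Extra Maker edges inside $H_j$ only reduce Breaker's possibilities there. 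So Breaker's edges in $H_j$ form a subset of what he would hold in some play against $\mathcal{M}_j$, and the forced vertex keeps Breaker-degree at most $\delta_B(H_j)$.

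I expect the main obstacle to be making the bonus-move monotonicity rigorous. It needs a one-line lemma: in the degree game, if a player's strategy wins, it still wins when that player additionally owns some edges. I would prove it by the usual simulation argument sketched above. A second, minor subtlety arises for infinite families or infinite components. There the game is infinite, so ``$\min$'' should be read as the attained value (or the infimum). The lower bound works verbatim, since each vertex is handled locally. For the upper bound I would restrict to the case where the minimizing component admits a Maker strategy in the sense of the definition of $\delta_B$, which is all the paper needs.
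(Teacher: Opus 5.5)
Your proposal is correct. For $\delta_B(G)\ge\min_i\delta_B(H_i)$ it uses the same idea as the paper: Breaker answers each Maker move inside the component where it was made, following an optimal strategy there. The paper's proof is essentially just that sentence, so your proposal adds two things the paper omits.

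\emph{The stalled-reply case.} You handle the situation where Breaker's prescribed reply is unavailable. This genuinely occurs for components with an odd number of edges, such as $K_{3,3}$ in Observation~\ref{univ}, where Maker can take the last edge of a component. For the $Q_3$ components used in Theorem~\ref{d-grid} (12 edges each) the same-component reply never stalls, which is why the paper's shortcut is harmless there. The one step of your bookkeeping worth writing out is the end of the game. There the simulated play on $H_i$ can be completed on the leftover bonus edges. Then every simulated Breaker edge is a real Breaker edge, so the guarantee of $\mathcal{S}_i$ transfers.

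\emph{The reverse inequality.} You actually prove the $\le$ direction, which the lemma asserts but the paper's proof does not address beyond saying the game ``decouples''. Your argument is the right one: Zermelo determinacy on a finite minimizing component $H_j$, Maker confined to $H_j$, and Breaker's moves elsewhere treated as passes that earn Maker a bonus edge. Only $\ge$ is used later in the paper, so this matters for the lemma as stated, not for the main theorems.

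Finally, the $\delta_B(H_i)$ are nonnegative integers, so the minimum over an infinite family is always attained and no infimum reading is needed.
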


\begin{proof}[Proof of Lemma~\ref{product}]
	The statement follows from a simple reactive strategy across the orthogonal layers of the Cartesian product. By definition, the edge set of $H \square G$ can be partitioned into two disjoint sets: $E_H$ and $E_G$ which consists of the copies of the edges of $H$ and $G$, respectively. Thus, $E(H \square G) = E_H \cup E_G$ and $E_H \cap E_G = \emptyset$.
	
	Let $\mathcal{S}_H$ and $\mathcal{S}_G$ be Breaker's optimal winning strategies that guarantee at least $\delta_B(H)$ edges at every vertex of $H$ and at least $\delta_B(G)$ edges at every vertex of $G$, respectively. Breaker plays the game on $H \square G$ by treating the two layers independently.
	
	Since the two edge sets are completely disjoint, Maker cannot interfere with the degree accounting across different layers. At the end of the game, for any vertex $v = (v_H, v_G) \in V(H \square G)$, the total degree secured by Breaker is the sum of the degrees secured in the subgames. Consequently, 
	$\deg_{\text{Breaker}}(v) \ge \delta_B(H) + \delta_B(G), $
	which implies $\delta_B(H \square G) \ge \delta_B(H) + \delta_B(G)$.
\end{proof}

\begin{proof}[Proof of Lemma~\ref{union}]
	The proof is analogous to that of Lemma~\ref{product}. Since $H$ and $G$ are edge-disjoint graphs defined on the exact same vertex set $V$, we have $E(H \cup G) = E(H) \cup E(G)$ with $E(H) \cap E(G) = \emptyset$. 
	Breaker can decouple the game on $H \cup G$ into separate, non-interfering subgames played on $H$ and $G$. At the end of the game, every vertex $v \in V$ accumulates edges from both independent subgames.
	This yields $\delta_B(H \cup G) \ge \delta_B(H) + \delta_B(G)$, completing the proof.
\end{proof}

\begin{proof}[Proof of Lemma~\ref{disj}]
	Since the components $H_i$ are pairwise vertex-disjoint and edge-disjoint, the game played on $G$ completely decouples into independent subgames played on each component. Whenever Maker claims an edge in a specific component, Breaker responds within the same component, following the optimal strategy.  
\end{proof}

\begin{proof}[Proof of Theorem~\ref{d-cube}]
	Any dimension $d \ge 1$ can be represented as $d = 3k + r$, where $k = \lfloor d/3 \rfloor$ and $r \in \{0, 1, 2\}$. This allows us to factor the hypercube as the Cartesian product of $k$ copies of $3$-dimensional hypercubes and a remaining factor $Q_r$:
	$Q_d = \underbrace{Q_3 \square Q_3 \square \dots \square Q_3}_{k \text{ times}} \square \, Q_r.$
	By Lemma~\ref{cube}, $\delta_B(Q_3) \ge 1$. By applying Lemma~\ref{product} over all factors, we obtain:
	$\delta_B(Q_d) \ge \left( \sum_{i=1}^{k} \delta_B(Q_3) \right) + \delta_B(Q_r) = k = \left\lfloor \frac{d}{3} \right\rfloor. $
\end{proof}

\begin{proof}[Proof of Theorem~\ref{d-grid}]
	Let $d = 3k + r$ again. Let $G^d$ denote either the infinite grid $\mathbb{Z}^d$ or the toroidal grid $\mathbb{T}^d_n$ with $n \ge 2$. Utilizing the Cartesian product factorization, we can decompose the graph into $k$ copies of $3$-dimensional base components and a remaining $r$-dimensional factor: 
	$G^d = \underbrace{G^3 \square G^3 \square \dots \square G^3}_{k \text{ times}} \square \, G^r.$
	To evaluate the base case $G^3$ (which is $6$-regular), we partition its edge set based on the parity of its localized grid coordinates into two edge-disjoint subgraphs $G_1$ and $G_2$. Geometrically, each $G_i$ ($i \in \{1,2\}$) forms a disjoint union of $3$-dimensional hypercubes. By Lemma~\ref{disj}, Breaker guarantees at least one edge per vertex in each layer. Then Lemma~\ref{union} implies $\delta_B(G^3) \ge 2$.
	By Lemma~\ref{product}, Breaker decouples the global game into the $k$ independent $6$-regular $G^3$ layers and the remaining $2r$-regular $G^r$ layer. On the $G^3$ components, applying the base strategy guarantees a degree of $2k$. On $G^r$, Breaker deploys the classical Euler-pairing strategy, which secures $\lfloor 2r/4 \rfloor = \lfloor r/2 \rfloor$ edges at every vertex.
	 
	\[ \delta_B(G^d) \ge \left( \sum_{i=1}^{k} \delta_B(G^3) \right) + \delta_B(G^r) \ge 2k + \left\lfloor \frac{r}{2} \right\rfloor = \lfloor 2d/3 \rfloor. \]
	
	Thus, $\delta_B(G^d) \ge \lfloor 2d/3 \rfloor$ holds for both $\mathbb{Z}^d$ and $\mathbb{T}^d_n$, completing the proof.
\end{proof}

\begin{observation} \label{univ}
	By repeatedly applying Lemma~\ref{product}, \ref{union} and \ref{disj}, one can observe that the $\lfloor d/3 \rfloor$ lower bound is not restricted to hypercubes and grids. Indeed, by taking any collection of small-order $3$-regular graphs on which Breaker has a winning strategy (such as $Q_3$, $K_4$, $K_{3,3}$, or $W$) and iteratively forming their Cartesian products or edge-disjoint unions, we can generate an infinite family of graphs where Breaker universally secures at least $\lfloor d/3 \rfloor$ edges at every single vertex.
\end{observation}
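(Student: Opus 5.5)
The plan is to turn Observation~\ref{univ} into a precise statement and prove it by structural induction. Let $\mathcal{F}$ be the smallest class of graphs with the following properties.
\begin{enumerate}
\item $\mathcal{F}$ contains every $3$-regular graph $C$ with $\delta_B(C)\ge 1$; by Lemma~\ref{cube} and Observation~\ref{small} this includes $Q_3$, $K_4$, $K_{3,3}$ and $W$.
\item $\mathcal{F}$ is closed under (possibly infinite) disjoint unions of members that share a common regularity.
\item $\mathcal{F}$ is closed under Cartesian products.
\item $\mathcal{F}$ is closed under edge-disjoint unions of two members on the same vertex set.
\end{enumerate}
The claim is that every $G\in\mathcal{F}$ is $3m$-regular for some $m\ge 1$ and satisfies $\delta_B(G)\ge m$. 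I will then add a final step: $\delta_B(G\square R)\ge \lfloor d/3\rfloor$ whenever $G\in\mathcal{F}$ and $R$ is any $r$-regular graph with $r\le 2$, where $d=3m+r$. This covers the non-divisible dimensions, exactly as the factor $Q_r$ does in the proof of Theorem~\ref{d-cube}.

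The induction runs over the construction.
\begin{itemize}
\item \emph{Base case.} This holds by definition of the generating graphs.
\item \emph{Disjoint unions.} If every $H_i$ is $3m$-regular with $\delta_B(H_i)\ge m$, then the union is $3m$-regular, and Lemma~\ref{disj} gives $\delta_B\ge\min_i\delta_B(H_i)\ge m$.
\item \emph{Products.} If $H$ is $3m_1$-regular and $G$ is $3m_2$-regular, then $H\square G$ is $3(m_1+m_2)$-regular, since each vertex $(h,g)$ has degree $\deg h+\deg g$. Lemma~\ref{product} gives $\delta_B(H\square G)\ge m_1+m_2$.
\item \emph{Edge-disjoint unions.} Degrees add at every vertex, and Lemma~\ref{union} gives the additive bound.
\item \emph{Remainder factor.} For $R$ with $r\le 2$, use $\delta_B(R)\ge 0$ trivially. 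Then $\lfloor (3m+r)/3\rfloor=m$ for $r\in\{0,1,2\}$, so Lemma~\ref{product} yields the bound. More generally, if $R$ is $r$-regular for arbitrary $r$, Observation~\ref{d4} gives $\delta_B(R)\ge\lfloor r/4\rfloor$. In that case the total is $m+\lfloor r/4\rfloor$, which is a weaker bound than $\lfloor d/3\rfloor$; this is why the remainder should be kept small.
\end{itemize}

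The main obstacle is making sure the decoupling in Lemmas~\ref{product}, \ref{union} and \ref{disj} is legitimate when they are iterated, and in particular that nesting them is harmless. The point to check is that Breaker's composite strategy always has a legal reply. Breaker answers each Maker move in the subgame that contains it, using that subgame's strategy. The strategy is then well defined, because each subgame is again a Maker-first $(1{:}1)$ game on a member of $\mathcal{F}$, where the induction hypothesis applies.

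If Breaker's prescribed subgame has no free edge left, Breaker claims an arbitrary free edge, or passes if the board is full. An extra Breaker edge can never decrease Breaker's degree anywhere. If that edge is later demanded by the sub-strategy, Breaker already owns it and may make another arbitrary move; this is the usual monotonicity argument for Maker--Breaker games. Once this point is verified, every vertex $v$ receives at least the sum (for products and edge-disjoint unions) or the minimum (for disjoint unions) of the guaranteed degrees. That completes the induction.

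For infinite members of $\mathcal{F}$, such as the grid decompositions used in Theorem~\ref{d-grid}, the same argument applies because each Maker move touches only finitely many finite subgames.
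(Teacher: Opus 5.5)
Your proposal is correct and is essentially the paper's own argument. The paper justifies Observation~\ref{univ} only by the phrase ``repeatedly applying Lemma~\ref{product}, \ref{union} and \ref{disj}''. Your structural induction over the closure class $\mathcal{F}$, with the remainder factor $R$ of degree at most $2$ handled exactly as $Q_r$ is in the proof of Theorem~\ref{d-cube}, is a faithful formalization of that. Your remarks on why nesting the decoupling is legitimate (Maker moves first in every subgame, and surplus Breaker moves never hurt) supply details the paper leaves implicit, but they do not change the route.
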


\section{Further Remarks and Open Problems}

Any result that establishes $\delta_B(Q_d) > d/3$ for some specific dimension $d$ can be scaled up to higher dimensios. Consequently, Theorem~\ref{d-cube} could be systematically improved if stronger base cases were found. For instance, demonstrating that $\delta_B(Q_5) = 2$ would immediately imply that $\delta_B(Q_d) \ge \lfloor 2d/5 \rfloor$. Unfortunately, it can be formally observed that $\delta_B(Q_5) = 1$, as demonstrated by Maker's winning strategy illustrated in Figure~\ref{Q5}. The analysis branches into two main scenarios, depending on whether Breaker responds to Maker's opening move by claiming an adjacent edge or not.

Regarding the next potential dimension for improving the $d/3$ baseline, demonstrating that $\delta_B(Q_8) = 3$ would immediately imply $\delta_B(Q_d) \ge \lfloor 3d/8 \rfloor$. Unfortunately, to check the $8$-dimensional hypercube graph is hopeless even by programming.

\begin{figure}[htbp]
	\centering
	\includegraphics[scale=0.5]{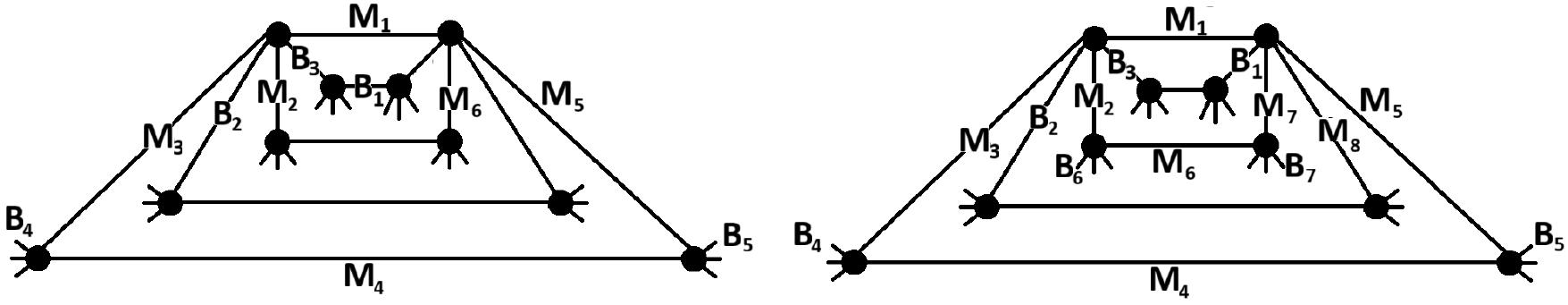}
	\caption {Two cases, Breaker cannot win the 2-Degree Game on $Q_5$.}
	\label{Q5}
\end{figure} 

Finally, several intriguing open questions arise if we modify the game rules to restrict Maker's movement. For instance, one could consider a variant where Maker is required to claim edges such that their induced subgraph remains connected at all times, mimicking the connectivity constraints in London and Pluh\'ar \cite{Lond}. Or Walker-Breaker games, where Maker must claim edges sequentially along a continuous path or walk in $G$ (see \cite{Esp, Forc1}). Chooser-Picker (or Client-Waiter) games represents a promising future research as well, as in case of other graph games \cite{CGH, Dvo, Dvo2, HKT1, HKT2, Kriv2}.

Another fundamental direction would be to characterize the class of all $3$-regular graphs on which Breaker has a winning strategy in the $1$-degree game.

\section*{Acknowledgments}
The author would like to express their deepest gratitude to Andr\'as Pluh\'ar. His insightful comments, valuable suggestions, and constant encouragement were indispensable throughout the brainstorming sessions and the writing of this paper.

Declaration on the use of generative AI. Gemini was used in a limited manner during manuscript preparation for language editing and organizational assistance. 




\begin{thebibliography}{99}

\bibitem{BP} J. Balogh, A. Pluh\'ar,
The positive minimum degree game on sparse graphs.
{\em The Electronic Journal of Combinatorics} {\bf 19} \#P22 (2012).

\bibitem{BMP} J. Balogh, R. Martin and A. Pluh\'ar,
The diameter game.
{\em Random Structures and Algorithms} Vol. {\bf 35}, 369--389 (2009).

\bibitem{Btree} J. Balogh, Private communication.

\bibitem{Beck2} J. Beck,
Deterministic graph games and a probabilistic intuition.
{\em Combinatorics, Probability and Computing} {\bf 3}, 13--26 (1994).

\bibitem{BeckLLL} J. Beck, 
An algorithmic approach to the Lovász local lemma. 
{\em I. Random Structures \& Algorithms,} {\bf 2(4)}, 343--365 (1991).

\bibitem{BB} J. Beck, {\em Combinatorial games. Tic-Tac-Toe Theory.} Cambridge University Press, 2008.

\bibitem{CGH} D. Clemens, P. Gupta, F. Hamann, A. Haupt, M. Mikalacki and Y. Mogge, Fast strategies in Waiter-Client games on $K_n$. {\em The Electronic Journal of Combinatorics,} {\bf 27(3)}, 1–-35 (2020).

\bibitem{Cser} 
A. Csernenszky, The Picker–Chooser diameter game. 
{\em Theoretical Computer Science,} {\bf 411(40-42)}, 3757--3762 (2010).

\bibitem{TI} C. Dowden, M. Kang, M. Mikalački, and M. Stojaković. The Toucher-Isolator game. {\em The Electronic Journal of Combinatorics,} 26(4):P4.6. (2019) https://doi.org/10.37236/8617

\bibitem{Dvo} V. Dvorák, 
Waiter–Client triangle-factor game on the edges of the complete
graph. {\em European Journal of Combinatorics,} 96:103356 (2021).

\bibitem{Dvo2} V. Dvorák,  
Waiter-Client clique-factor game. {\em Discrete Mathematics,} 346(1), 113191 (2023).

\bibitem{erdos} P. Erd\H{o}s and J. L. Selfridge,
On a combinatorial game.
\textit{Journal of Combinatorial Theory Series A} \textbf{14}, 298--301 (1973).

\bibitem{Esp} L. Espig, A. Frieze, M. Krivelevich, W. Pegden,  Walker-breaker games. {\em SIAM Journal on Discrete Mathematics,} {\bf29(3)}, 1476--1485, (2015).

\bibitem{Forc1} J. Forcan, M. Mikalački, Spanning structures in Walker–Breaker games. {\em Fundamenta Informaticae,} {\bf 185}(1), 83--97 (2022).

\bibitem{Forc2} J. Forcan, M. Mikalački.  Maker-Breaker total domination game on cubic graphs. {\em Discrete Mathematics \& Theoretical Computer Science,} 24(Graph Theory), (2022).

\bibitem{GSZ} 
H. Gebauer and T. Szab\'o,  
Asymptotic random graph intuition for the biased connectivity game. 
{\em Random Structures \& Algorithms,} {\bf 35(4)}, 431--443 (2009).

\bibitem{Geb} 
H. Gebauer, 
Disproof of the neighborhood conjecture with implications to SAT. 
{\em Combinatorica,} {\bf 32(5)}, 573--587 (2012).


\bibitem{Hefetz} D. Hefetz, M. Krivelevich, M. Stojakovi\'c and
T. Szab\'o, Global Maker-Breaker games on sparse graphs.
{\em European Journal of Combinatorics} {\bf 32}, 162--177 (2011).

\bibitem{HKT1} D. Hefetz, M. Krivelevich and W. E. Tan, 
Waiter–Client and Client–Waiter planarity, colorability and minor games. {\em Discrete Mathematics,} {\bf 339(5)}, 1525--1536 (2016).

\bibitem{HKT2} D. Hefetz, M. Krivelevich and W. E. Tan, 
Waiter–Client and Client–Waiter Hamiltonicity games on random graphs. {\em European Journal of Combinatorics,} {\bf 63,} 26--43 (2017).

\bibitem{Knox} F. Knox, Two constructions relating to conjectures of Beck on positional games. {\em arXiv preprint arXiv:} 1212.3345 (2012). 


\bibitem{KSZ} 
M. Krivelevich, M. and T. Szab\'o,  
Biased positional games and small hypergraphs with large covers. 
{\em The Electronic Journal of Combinatorics,} R70-R70 (2008).


\bibitem{Kriv2} M. Krivelevich and O. Dean. Client–Waiter Games on Complete and Random Graphs. {\em The Electronic Journal of Combinatorics,} 23(4):P4.38 (2016).

\bibitem{Lond} A. London and A. Pluhár, Spanning Tree Game as Prim Would Have Played. {\em Acta Cybernetica} {\bf 23}(3) 921--927 (2018).

\bibitem{MT} R. A. Moser and G. Tardos, 
A constructive proof of the general Lovász local lemma. 
{\em Journal of the ACM (JACM),} {\bf 57(2)}, 1--15 (2010).


\bibitem{NS} R. Naimi and E. Sundberg. Pairing strategies for the Maker–Breaker game on the hypercube with subcubes as winning sets. {\em Discrete Mathematics} 346(10), 113576 (2023).

\bibitem{Szekely} L. A. Sz\'ekely,
On two concepts of discrepancy in a class of combinatorial
games. Finite and Infinite Sets, Colloq. Math. Soc. J\'anos Bolyai,
Vol. 37, North-Holland, 679--683 (1984).


\end{thebibliography}
\end{document}